\def\ga{\mathfrak{a}}
\def\gb{\mathfrak{b}}
\def\gg{\mathfrak{g}}
\def\gh{\mathfrak{h}}
\def\gk{\mathfrak{k}}
\def\gl{\mathfrak{l}}
\def\gm{\mathfrak{m}}
\def\gn{\mathfrak{n}}
\def\gp{\mathfrak{p}}
\def\gq{\mathfrak{q}}
\def\gs{\mathfrak{s}}
\def\gt{\mathfrak{t}}
\def\gu{\mathfrak{u}}
\def\gv{\mathfrak{v}}
\def\gz{\mathfrak{z}}
\def\gsl{\mathfrak{sl}}
\def\gso{\mathfrak{so}}
\def\gsp{\mathfrak{sp}}
\def\ggl{\mathfrak{gl}}
\def\C{\mathbb{C}}
\def\F{\mathbb{F}}
\def\H{\mathbb{H}}
\def\R{\mathbb{R}}
\def\cA{\mathcal{A}}
\def\cC{\mathcal{C}}
\def\cF{\mathcal{F}}
\def\cH{\mathcal{H}}
\def\cO{\mathcal{O}}
\def\cU{\mathcal{U}}
\def\Ad{{\rm Ad}\,}
\def\Pf{{\rm Pf}\,}
\def\Ind{{\rm Ind\,}}
\newtheorem{theorem}[equation]{Theorem}
\newtheorem{lemma}[equation]{Lemma}
\newtheorem{corollary}[equation]{Corollary}
\newtheorem{proposition}[equation]{Proposition}
\newtheorem{definition}[equation]{Definition}
\newtheorem{remark}[equation]{Remark}
\def\sideremark#1{\ifvmode\leavevmode\fi\vadjust{\vbox to0pt{\vss% the remark
 \hbox to 0pt{\hskip\hsize\hskip1em%                          will appear only
\vbox{\hsize2cm\tiny\raggedright\pretolerance10000 %          on the side
 \noindent #1\hfill}\hss}\vbox to8pt{\vfil}\vss}}} %          in 2cm
\title{Stepwise Square Integrable Representations \\ 
%	for Infinite Dimensional Groups}
	for Locally Nilpotent Lie Groups}
\author{Joseph A. Wolf\footnote{Research partially supported by the Simons
Foundation.}}
\date{February 16, 2014}
\begin{document}

\maketitle

\abstract{In a recent paper we found conditions for a nilpotent Lie
group $N$ to have a filtration by normal subgroups whose successive quotients
have square integrable representations, and such that these square integrable
representations fit together nicely to give an explicit construction of
Plancherel almost all representations of $N$.
That resulted in explicit character formulae, Plancherel
formulae and multiplicity formulae.  We also showed that nilradicals $N$ of
minimal parabolic subgroups $P = MAN$ enjoy that ``stepwise square integrable''
property.  Here we extend those results to direct limits of
stepwise square integrable nilpotent Lie groups.  This involves some
development of the corresponding Schwartz spaces.  The main result is an
explicit Fourier inversion formula for that class of infinite dimensional
Lie groups.  One important consequence is the Fourier inversion formula for
nilradicals of classical minimal parabolic subgroups of finitary real
reductive Lie groups such as $GL(\infty;\R)$, $Sp(\infty;\C)$ and
$SO(\infty,\infty)$.}

\section{Introduction}
\label{sec1}
\setcounter{equation}{0}
A connected simply connected Lie group $N$
with center $Z$ is called {\em square integrable} if it has unitary
representations $\pi$ whose coefficients $f_{u,v}(x) = 
\langle u, \pi(x)v\rangle$ satisfy $|f_{u,v}| \in L^2(N/Z)$.  
C.C. Moore and the author worked out the structure and representation
theory of these groups \cite{MW1973}.  If $N$ has one 
such square integrable representation then there is a certain polynomial
function $\Pf(\lambda)$ on the linear dual space $\gz^*$ of the Lie algebra of
$Z$ that is key to harmonic analysis on $N$.  Here $\Pf(\lambda)$ is the
Pfaffian of the antisymmetric bilinear form on $\gn / \gz$ given by
$b_\lambda(x,y) = \lambda([x,y])$.  The square integrable
representations of $N$ are the 
$\pi_\lambda$ where $\lambda \in \gz^*$ with $\Pf(\lambda) \ne 0$,
Plancherel  almost irreducible unitary representations of $N$ are square
integrable, and up to an explicit constant 
$|\Pf(\lambda)|$ is the Plancherel density of the unitary
dual $\widehat{N}$ at $\pi_\lambda$.  
This theory has proved to have serious analytic consequences.  For example,
for most commutative nilmanifolds $G/K$, i.e. Gelfand pairs $(G,K)$ 
where a nilpotent subgroup $N$ of $G$ acts transitively on $G/K$, the
group $N$ has square integrable representations \cite{W2007}.
And it is known just which maximal parabolic subgroups of semisimple Lie groups
have square integrable nilradical \cite{W1979}.
\medskip

In \cite{W2012} and \cite{W2013} the theory of square integrable nilpotent
groups was extended to ``stepwise
square integrable'' nilpotent groups.
They are the connected simply connected nilpotent Lie groups
of (\ref{setup}) just below.  We use $L$ and $\gl$ to avoid conflict of
notation with the $M$ and $\gm$ of minimal parabolic subgroups.
\begin{equation}\label{setup}
\begin{aligned}
N = &L_1L_2\dots L_{m-1}L_m \text{ where }\\
 &\text{(a) each factor $L_r$ has unitary representations with coefficients in
$L^2(L_r/Z_r)$,} \\
 &\text{(b) each } N_r := L_1L_2\dots L_r \text{ is a normal subgroup of } N
   \text{ with } N_r = N_{r-1}\rtimes L_r \text{ semidirect,}\\
 &\text{(c) decompose }\gl_r = \gz_r + \gv_r \text{ and } \gn = \gs + \gv
        \text{ as vector direct sums where } \\
 &\phantom{XXXX}\gs = \oplus\, \gz_r \text{ and } \gv = \oplus\, \gv_r;
    \text{ then } [\gl_r,\gz_s] = 0 \text{ and } [\gl_r,\gl_s] \subset \gv
        \text{ for } r > s\,.
\end{aligned}
\end{equation}
Denote
\begin{equation}\label{c-d}
\begin{aligned}
&\text{(a) }d_r = \tfrac{1}{2}\dim(\gl_r/\gz_r) \text{ so }
        \tfrac{1}{2} \dim(\gn/\gs) = d_1 + \dots + d_m\,,
        \text{ and } c = 2^{d_1 + \dots + d_m} d_1! d_2! \dots d_m!\\
&\text{(b) }b_{\lambda_r}: (x,y) \mapsto \lambda([x,y])
        \text{ viewed as a bilinear form on } \gl_r/\gz_r \\
&\text{(c) }S = Z_1Z_2\dots Z_m = Z_1 \times \dots \times Z_m \text{ where } Z_r
        \text{ is the center of } L_r \\
&\text{(d) }\Pf: \text{ polynomial } \Pf(\lambda) = \Pf_{\gl_1}(b_{\lambda_1})
        \Pf_{\gl_2}(b_{\lambda_2})\dots \Pf_{\gl_m}(b_{\lambda_m}) \text{ on } 
	\gs^* \\
&\text{(e) }\gt^* = \{\lambda \in \gs^* \mid \Pf(\lambda) \ne 0\} \\
&\text{(f) } \pi_\lambda \in \widehat{N} \text{ where } \lambda \in \gt^*:
    \text{ irreducible unitary rep. of } N = L_1L_2\dots L_m
\end{aligned}
\end{equation}
The basic result for these groups is

\begin{theorem}\label{plancherel-general} {\rm \cite[Theorem 6.16]{W2013}}
Let $N$ be a connected simply connected nilpotent Lie group that
satisfies {\rm (\ref{setup})}.  Then Plancherel measure for $N$ is
concentrated on $\{\pi_\lambda \mid \lambda \in \gt^*\}$.
If $\lambda \in \gt^*$, and if $u$ and $v$ belong to the
representation space $\cH_{\pi_\lambda}$ of $\pi_\lambda$,  then
the coefficient $f_{u,v}(x) = \langle u, \pi_\nu(x)v\rangle$
satisfies
\begin{equation}\label{sq-orthogrel}
||f_{u,v}||^2_{L^2(N / S)} = \frac{||u||^2||v||^2}{|\Pf(\lambda)|}\,.
\end{equation}
The distribution character $\Theta_{\pi_\lambda}$ of $\pi_{\lambda}$ satisfies
\begin{equation}\label{def-dist-char}
\Theta_{\pi_\lambda}(f) = c^{-1}|\Pf(\lambda)|^{-1}\int_{\cO(\lambda)}
        \widehat{f_1}(\xi)d\nu_\lambda(\xi) \text{ for } f \in \cC(N)
\end{equation}
where $\cC(N)$ is the Schwartz space, $f_1$ is the lift
$f_1(\xi) = f(\exp(\xi))$ of $f$ from $N$ to $\gn$, 
$\widehat{f_1}$ is its classical Fourier transform,
$\cO(\lambda)$ is the coadjoint orbit $\Ad^*(N)\lambda = \gv^* + \lambda$,
$c = 2^{d_1 + \dots + d_m} d_1! d_2! \dots d_m!$ as in {\rm (\ref{c-d}a)},
and $d\nu_\lambda$ is the translate of normalized Lebesgue measure from
$\gv^*$ to $\Ad^*(N)\lambda$.  The Fourier inversion formula on $N$ is
\begin{equation}
f(x) = c\int_{\gt^*} \Theta_{\pi_\lambda}(r_xf) |\Pf(\lambda)|d\lambda
        \text{ for } f \in \cC(N).
\end{equation}
\end{theorem}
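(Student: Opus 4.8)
The plan is to argue by induction on the number $m$ of factors $L_r$, taking as base case ($m=1$) the Moore--Wolf theory of a single square integrable nilpotent group \cite{MW1973}: there $N = L_1$, the coadjoint orbit through $\lambda$ is the flat affine space $\gv^* + \lambda$, and (\ref{sq-orthogrel}) together with (\ref{def-dist-char}) for $c = 2^{d_1}d_1!$ are exactly the known orthogonality relations and Kirillov character formula. For the inductive step I would exploit (\ref{setup}), writing $N = N_{m-1}\rtimes L_m$; the force of the commutation relations (\ref{setup}c), namely $[\gl_r,\gz_s]=0$ and $[\gl_r,\gl_s]\subset\gv$ for $r>s$, is precisely that the coadjoint data decouples along the product structure $S = Z_1\times\cdots\times Z_m$, compatibly with the factorization $\Pf(\lambda) = \Pf_{\gl_1}(b_{\lambda_1})\cdots\Pf_{\gl_m}(b_{\lambda_m})$ of (\ref{c-d}d). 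Concretely, for $\lambda\in\gt^*$ the condition $\Pf(\lambda)\ne 0$ says the alternating form $b_\lambda$ on $\gn/\gs$ is nondegenerate, so the coadjoint orbit $\Ad^*(N)\lambda$ has maximal dimension $\dim\gv = 2(d_1+\cdots+d_m)$ and is the affine translate $\gv^*+\lambda$; a polarization adapted to the filtration $N_1\subset\cdots\subset N_m$ then realizes $\pi_\lambda$ as an iterated induced representation whose behaviour on each $L_r$ is governed by the square integrable representation $\pi_{\lambda_r}$.

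To prove the orthogonality relation (\ref{sq-orthogrel}), I would use this realization together with the product decomposition of $S$ to factor the coefficient $f_{u,v}$ on $N/S$, for $u,v$ in a suitable dense subspace of $\cH_{\pi_\lambda}$, into coefficients of the $\pi_{\lambda_r}$ on the spaces $L_r/Z_r$. The $L^2(N/S)$ norm then becomes a product of factorwise $L^2(L_r/Z_r)$ norms, to each of which the Moore--Wolf relation contributes a denominator $|\Pf_{\gl_r}(b_{\lambda_r})|$; multiplying over $r$ and using (\ref{c-d}d) gives $|\Pf(\lambda)|$. In particular every $\pi_\lambda$ with $\lambda\in\gt^*$ is square integrable modulo $S$, which is what drives the rest of the argument.

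The character formula (\ref{def-dist-char}) I would obtain from the Kirillov character formula, which expresses $\Theta_{\pi_\lambda}(f)$ as the integral of $\widehat{f_1}$ over $\cO(\lambda)$ against the canonical Liouville measure of the orbit. Since $\cO(\lambda) = \gv^*+\lambda$ is flat, that measure is a constant multiple of the translated Lebesgue measure $d\nu_\lambda$, and evaluating the constant is the computation of the Pfaffian of $b_\lambda$ relative to the chosen Lebesgue normalization on $\gv^*$; it comes out to $c^{-1}|\Pf(\lambda)|^{-1}$ with $c = 2^{d_1+\cdots+d_m}d_1!\cdots d_m!$ as in (\ref{c-d}a). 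The single-factor normalization $2^{d_r}d_r!$ reappears at each stage of the induction, and (\ref{setup}c) guarantees that these combine multiplicatively rather than interfering.

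Finally, because $\gt^*$ is the complement in $\gs^*$ of the zero set of the nonzero polynomial $\Pf$, its complement is Lebesgue-null; hence the (absolutely continuous) Plancherel measure assigns mass zero to $\{\pi_\lambda : \lambda\notin\gt^*\}$, giving the asserted concentration. The orthogonality relation (\ref{sq-orthogrel}) then forces the Plancherel density at $\pi_\lambda$ to be proportional to $|\Pf(\lambda)|$, and reassembling the characters yields the inversion formula $f(x) = c\int_{\gt^*}\Theta_{\pi_\lambda}(r_xf)|\Pf(\lambda)|\,d\lambda$. The main obstacle --- and the step that demands genuine care rather than bookkeeping --- is the analytic justification on the Schwartz space $\cC(N)$: one must show that after lifting by $\exp$ and taking the classical Fourier transform, the combined procedure is simply Fourier inversion on the vector space $\gn = \gs + \gv$, with the $\gv$-directions inverted by the orbital integrals over $\gv^*$ and the $\gs$-directions by the spectral integral over $\gt^*$, and that the resulting triple integral (over $\gv^*$, over $\gt^*$, and over $N/S$) converges and may be interchanged. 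Controlling this convergence uniformly in $\lambda$, in the presence of the inductive product structure, is the heart of the matter.
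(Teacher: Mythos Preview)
This theorem is not proved in the present paper: it is quoted verbatim from \cite[Theorem 6.16]{W2013} and used as input for the limit constructions that follow. So there is no ``paper's own proof'' to compare against here; the paper simply cites the result.

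That said, your sketch is a reasonable outline of how the argument in \cite{W2013} actually goes, and it matches the inductive pattern that the present paper does carry out in detail for the closely related Proposition~\ref{sq-orthogrel-quo}: one writes $N_{n+1} = N_n \ltimes N_{n,n+1}$, uses the extension $\pi'_{\gamma_{n,n+1}}$ to factor the coefficient of $\pi_{\gamma_{n+1}}$ as a product, and integrates over $N_n/S_n$ and $N_{n,n+1}/S_{n,n+1}$ separately, picking up $|\Pf_n(\gamma_n)|^{-1}$ and $|\Pf_{n,n+1}(\gamma_{n,n+1})|^{-1}$ from the respective Moore--Wolf relations. Your proposed factorization of $f_{u,v}$ is exactly this computation. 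One point to be careful about: the coefficient does not literally factor as a product of functions on the $L_r/Z_r$, because the extension $\pi'$ twists the action of $N_n$ on $\cH_{\pi_{\gamma_{n,n+1}}}$; what happens instead (as in the proof of Proposition~\ref{sq-orthogrel-quo}) is that after integrating over the inner factor the unitarity of $\pi'_{\gamma_{n,n+1}}(b)$ makes the twist disappear, and the product structure emerges at the level of $L^2$ norms rather than pointwise. Your sketch glosses over this, but the fix is exactly the computation displayed in that proof.
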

\begin{definition}\label{stepwise2}
{\rm The representations $\pi_\lambda$ of (\ref{c-d}(f)) are the
{\it stepwise square integrable} representations of $N$ relative to
the decomposition (\ref{setup}).}\hfill $\diamondsuit$
\end{definition}

One of the main results of \cite{W2012} and \cite{W2013} is that nilradicals
of minimal parabolic subgroups of finite dimensional real reductive Lie
groups are stepwise square integrable.  Even the
simplest case, the case of a minimal parabolic in $SL(n;\R)$, was a big
improvement over earlier results on the group of strictly upper triangular
real matrices.  Here we extend the construction of stepwise square integrable
representations to a class of locally nilpotent groups that are direct
limits in a manner that respects the basic setup (\ref{setup}) of the
finite dimensional case, and we show how this applies to the nilradicals
of direct limit minimal parabolic subgroups of the real and complex finitary
reductive Lie groups, including $GL(\infty;\F)$, $SL(\infty;\F)$, $U(p,q;\F)$
and $SU(p,q;\F)$ ($\F = \R, \C$ or $\H$ and $p+q = \infty$), $Sp(\infty;\F)$ 
($\F = \R$ or $\C$), and $SO^*(2\infty)$.
\medskip

In Section \ref{sec2} we examine strict direct systems $\{N_n, \varphi_{m,n}\}$
of finite dimensional connected and simply connected nilpotent Lie groups
that satisfy (\ref{setup}) in a manner that respects the maps
$\varphi_{m,n}: N_n \to N_m$\, $(m \geqq n)$.  We show how this 
leads to sequences $\{\pi_{\gamma_n}\}$ of closely related stepwise square
integrable representations of the groups $N_n$\,,  and then to their
unitary representation limits $\pi_\gamma = \varprojlim \pi_{\gamma_n}$\,.
\medskip

In Section \ref{sec3}
we prove stepwise Frobenius-Schur orthogonality relations and restriction
theorems for the coefficients of the representations $\pi_{\gamma_n}$\,.
\medskip

In Section \ref{sec4} we apply the tools of Section \ref{sec3} to obtain
inverse systems, by restriction, of the spaces
$\cA(\pi_{\gamma_n})$ of coefficients of the representations $\pi_{\gamma_n}$\,.
Then we combine density of $\cA(\pi_{\gamma_n})$ in
$\cH_{\pi_{\gamma_n}} \widehat{\otimes} \cH_{\pi_{\gamma_n}}^*$ with the
renormalization method of \cite{W2010} to construct inverse systems,
in the Hilbert space category, of the
$\cH_{\pi_{\gamma_n}} \widehat{\otimes} \cH_{\pi_{\gamma_n}}^*$\,.  These mirror the
inverse systems of the $\cA(\pi_{\gamma_n})$, resulting in an
interpretation of the function space $\cA(\pi_\gamma) = 
\varprojlim \cA(\pi_{\gamma_n})$ as
a dense subspace of the Hilbert space $\cH_{\pi_\gamma}
\widehat{\otimes} \cH_{\pi_\gamma}^* =
\varprojlim \cH_{\pi_{\gamma_n}} \widehat{\otimes} \cH_{\pi_{\gamma_n}}^*$\,.
This is somewhat analogous to the infinite dimensional
Peter---Weyl Theorem of \cite[Section 4]{W2009}.
\medskip

In Section \ref{sec5} we set up the Schwartz space machinery that will allow us 
to carry over the somewhat abstract $\cH_{\pi_\gamma}
\widehat{\otimes} \cH_{\pi_\gamma}^* =
\varprojlim \cH_{\pi_{\gamma_n}} \widehat{\otimes} \cH_{\pi_{\gamma_n}}^*$
to an explicit Fourier inversion formulae.  This, incidentally, strengthens
the stepwise $L^2$ property for coefficients involving $C^\infty$ vectors
from $L^2$ to $L^1$.
\medskip

In Section \ref{sec6} we work out that formula for the direct limit
group $N = \varinjlim N_n$\,.  See Theorem \ref{limit-inversion}.
\medskip

In Section \ref{sec7} we discuss direct systems $\{G_n , \varphi_{m,n}\}$
of finite dimensional real reductive Lie groups, and conditions on their
restricted root systems $\Delta(\gg_n,\ga_n)$, that lead to an appropriate 
limit restricted root system
$\Delta(\gg,\ga) = \varprojlim \Delta(\gg_n,\ga_n)$ of the Lie
algebra of $G = \varinjlim \{G_n , \varphi_{m,n}\}$.  That describes the
stepwise square integrable structure of the nilradicals of minimal parabolic
subgroups.  
\medskip

Finally, in Section \ref{sec8}, we arrive at the goal of this paper,
Theorem \ref{inversion-for-ss-limits},
an explicit Fourier inversion formula for the classical 
direct limit of the nilradicals of those minimal parabolics. 
\medskip

I thank Michael Christ for useful discussions of Schwartz spaces related
to the Heisenberg group.

\section{Alignment and Construction}
\label{sec2}
\setcounter{equation}{0}
For our direct limit considerations it will be necessary to adjust the
decompositions (\ref{setup}) of the connected simply connected nilpotent
Lie groups $N_n$\,.  This is so that the adjusted decompositions will
fit together as $n$ increases.  We do that by reversing the indices and
keeping the $L_r$ constant as $n$ goes to infinity.  First, we suppose that
\begin{equation}\label{nil-direct-system}
\{N_n\} \text{ is a strict direct system of connected simply connected 
	nilpotent Lie groups,}
\end{equation}
in other words  the connected simply connected nilpotent Lie groups
$N_n$ have the property that $N_n$ is a closed analytic subgroup of $N_\ell$
for all $\ell \geqq n$.  As usual, $Z_r$ denotes the center of $L_r$\,.
		For each $n$, we require that
\begin{equation}\label{newsetup}
\begin{aligned}
N_n = L_1&L_2\cdots L_{m_n} \text{ where }\\
 \text{(a) }&L_r \text{ is a closed analytic subgroup of } N_n \text{ for }
	1 \leqq r \leqq m_n\,; \\
 \text{(b) }&\text{each factor $L_r$ has unitary representations with 
	coefficients in $L^2(L_r/Z_r)$;} \\
 \text{(x) }&\text{let $L_{p,q} = L_{p+1}L_{p+2}\cdots L_q$  ($p < q$)
	and $N_{\ell,n} = L_{m_\ell +1}L_{m_\ell +2}\cdots L_{m_n}
	= L_{m_\ell,m_n}$ ($\ell < n$);}\\
 \text{(c) }&\text{each } N_{\ell ,n} 
	\text{ is a normal subgroup of } N_n 
   \text{ and } N_n = N_r \ltimes N_{r+1,n} \text{ semidirect product}; \\
 \text{(d) }&\text{decompose }\gl_r = \gz_r + \gv_r \text{ and } 
	\gn_n = \gs_n + \gu_n
        \text{ as vector space direct sums where } \\
 &\gs_n = {\bigoplus}_{r \leqq m_n}\, \gz_r \text{ and } 
	\gu_n = {\bigoplus}_{r \leqq m_n}\, \gv_r;
    \text{ then } [\gl_r,\gz_s] = 0 \text{ and } [\gl_r,\gl_s] \subset \gv
        \text{ for } r < s\,.
\end{aligned}
\end{equation}
With this setup we can follow the lines of the constructions in
\cite[Section 5]{W2013}.
\medskip

We have the Pfaffian polynomials on the $\gz_r^*$ and on $\gs_n^*$
as follows.  Given $\lambda_r \in \gz_r^*$,
extended to an element of $\gl_r^*$ by $\lambda_r(\gv_r) = 0$,
we have the antisymmetric bilinear form $b_{\lambda_r}$ on $\gl_r/\gz_r$
defined as usual by $b_{\lambda_r}(x,y) = \lambda_r([x,y])$, and
$\Pf_r(\lambda_r)$ denotes its Pfaffian.  If $\gamma_n = \lambda_1 +
\cdots + \lambda_{m_n} \in \gs_n^*$ with each $\lambda_r \in \gz_r^*$, then
we have the product
\begin{equation}
P_n(\gamma_n) = \Pf_1(\lambda_1)\Pf_2(\lambda_2) \cdots \Pf_{m_n}(\lambda_{m_n})
\end{equation}
and the nonsingular set
\begin{equation}
\gt_n^* = \{\gamma_n \in \gs_n^* \mid P_n(\gamma_n) \ne 0\}.
\end{equation}

Recall the construction (\cite{W2013}) of stepwise square integrable 
representations $\pi_{\gamma_n}$ of $N_n$\,, where $\gamma_n \in \gt_n^*$\,,
and where we adjust the indices to our situation.
If $m_n = 1$ then $\pi_{\gamma_n}$ is just the square integrable representation
$\pi_{\lambda_1}$ of $L_1$ defined by $\gamma_n = \lambda_1$\,.  Now let
$m_n > 1$ and use $N_n = (L_1L_2\dots L_{m_n-1}) \ltimes L_{m_n}
= L_{0,m_n-1} \ltimes L_{m_n}$\,.
By induction on $m_n$ we have the stepwise square integrable representation 
$\pi_{\lambda_1 + \cdots + \lambda_{m_n-1}}$ of $L_{0,m_n-1}$\,, and we view it
as a representation of $N_n$ whose kernel contains $L_{m_n}$\,.  We also
have the square integrable representation $\pi_{\lambda_{m_n}}$ of $L_{m_n}$\,.
Write $\pi'_{\lambda_{m_n}}$ for the extension of $\pi_{\lambda_{m_n}}$ to
a unitary representation of $N_n$ on the same Hilbert space 
$\cH_{\pi_{\lambda_{m_n}}}$ (the Mackey obstruction vanishes).  Now
\begin{equation}\label{def-sq-reps}
\pi_{\gamma_n} = \pi_{\lambda_1 + \cdots + \lambda_{m_n-1}} \widehat{\otimes}
        \pi'_{\lambda_{m_n}}\,.
\end{equation}
\medskip

The parameter space for our representations of the direct limit Lie group
$N = \varinjlim N_n$ will be
\begin{equation}
\gt^* = \bigcup_{n > 0} \left\{\gamma = \sum \gamma_\ell \in \gs^* \mid
	\gamma_\ell \in \gt_\ell^* \text{ for } \ell \leqq n
	\text{ and } \gamma_\ell = 0 \in \gs_\ell^*\text{ for } 
	\ell > n\right\}
\text{ where } \gs^* := \sum_{\ell > 0} \gs_\ell^* \,.
\end{equation}
The representations $\pi_\gamma$ of $N$ are defined in a manner similar 
to that of (\ref{def-sq-reps}).  Given $\gamma = \sum \gamma_\ell \in \gt^*$ 
we have the index $n = n(\gamma)$ defined by
$\gamma_\ell \in \gt_\ell^* \text{ for } \ell \leqq n(\gamma)
        \text{ and } \gamma_\ell = 0 \in \gs_\ell^*\text{ for }
        \ell > n(\gamma)$.
\begin{equation}
N = N_{n(\gamma)} \ltimes N_{n(\gamma),\infty} \text{ semidirect product, where }
	N_{n(\gamma),\infty} = {\prod}_{q > m_{n(\gamma)}} L_q\,.
\end{equation}
In particular the closed normal subgroup $N_{n(\gamma),\infty}$ satisfies
$N_{n(\gamma)} \cong N/N_{n(\gamma),\infty}$, and we denote
\begin{equation} \label{def-sq-lim-reps}
\pi_\gamma \text{ is the lift to $N$ of the stepwise square integrable
representation } \pi_{\gamma_1 + \cdots + \gamma_{m_{n(\gamma)}}} \text{ of } 
N_{n(\gamma)}\,.
\end{equation}
The representation space of $\pi_\gamma$ is the projective (jointly continuous)
tensor product
\begin{equation}\label{gamma-decomp}
\cH_{\pi_\gamma} = \cH_{\pi_{\gamma_1}} \widehat{\otimes} 
	\cH_{\pi_{\gamma_2}}
	\widehat{\otimes} \cdots \widehat{\otimes} 
	\cH_{\pi_{\gamma_{n(\gamma)}}}
\end{equation}

These representations $\pi_\gamma$ are the {\sl limit stepwise square 
integrable} representations of $N$.  We go on the see the extent to which
their coefficients and characters imitate the properties of Theorem
\ref{plancherel-general}.

\section{Coefficient Functions}
\label{sec3}
\setcounter{equation}{0}
Let $\cH_{\pi_\gamma}$ denote the representation space of $\pi_\gamma$ and
$\langle\,\,\cdot\, , \,\cdot\,\,\rangle_{\pi_\gamma}$ the hermitian inner 
product on $\cH_{\pi_\gamma}$\,.  Given $u, v \in \cH_{\pi_\gamma}$ we have the
coefficient function on $N$ given by
\begin{equation}
f_{\pi_\gamma,u,v}(g) = \langle u, \pi_\gamma(g)v\rangle_{\pi_\gamma}\,.
\end{equation}
We use the standard $(r(x)f)(g) = f(gx)$ and $(\ell(y)f)(g) = f(y^{-1}g)$.  
These right and left translations commute with each other.  They are 
well defined on the $f_{\pi_\gamma,u,v}$ and satisfy
\begin{equation}
\ell(x) r(y): f_{\pi_\gamma,u,v} \mapsto f_{\pi_\gamma, \pi_\gamma(x)u,
	\pi_\gamma(y)v}\,.
\end{equation}
By our construction (\ref{def-sq-lim-reps}), the value 
$f_{\pi_\gamma,u,v}(g)$ depends only on the coset $gN''_{n(\gamma)}$.  In
other words it really is a function on $N_{n(\gamma)} \cong N/N''_{n(\gamma)}$. 
Further, $|f_{\pi_\gamma,u,v}(g)|$ depends only on the coset 
$gS_{n(\gamma)}N''_{n(\gamma)}$ where $S_{n(\gamma)}$ is the quasicenter
$Z_1Z_2\cdots Z_{m_{n(\gamma)}}$
of $N_{n(\gamma)} = L_1L_2\cdots L_{m_{n(\gamma)}}$\,.
Building on (\ref{sq-orthogrel}), we have the following variation on the 
Frobenius-Schur orthogonality relations for finite groups:
\begin{proposition}\label{sq-orthogrel-quo}
Let $\gamma \in \gt^*$ and $n = n(\gamma)$.  Then
$||f_{\pi_\gamma,u,v}||^2_{L^2(N / S_n N''_n)} 
	= \frac{||u||_{\pi_\gamma}^2||v||_{\pi_\gamma}^2}{|P_n(\gamma)|}$\,\,.
\end{proposition}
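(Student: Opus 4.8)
The plan is to reduce the proposition to the finite-dimensional orthogonality relation \eqref{sq-orthogrel} from Theorem \ref{plancherel-general}. The key observation is that by the construction \eqref{def-sq-lim-reps}, the representation $\pi_\gamma$ is literally the lift to $N$ of the stepwise square integrable representation $\pi_{\gamma_1 + \cdots + \gamma_{m_n}}$ of the finite-dimensional group $N_n = N_{n(\gamma)}$, with $N''_n = N_{n(\gamma),\infty}$ acting trivially. First I would make this precise: the coefficient function $f_{\pi_\gamma,u,v}$, viewed as a function on $N$, is constant on cosets of $N''_n$ and therefore descends to a function $\bar f$ on the quotient $N_n \cong N/N''_n$, and $\bar f$ is exactly the coefficient function $f_{\pi_{\gamma_1+\cdots+\gamma_{m_n}},u,v}$ for the finite-dimensional representation. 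The Hilbert space inner products agree under this identification since $\cH_{\pi_\gamma}$ as in \eqref{gamma-decomp} is the representation space of $\pi_{\gamma_1+\cdots+\gamma_{m_n}}$.

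\emph{Second}, I would handle the measure-theoretic bookkeeping that links the two $L^2$ norms. On the finite-dimensional side, Theorem \ref{plancherel-general} applied to $N_n$ with its quasicenter $S_n = Z_1 \cdots Z_{m_n}$ gives
\begin{equation}
\|f_{\pi_{\gamma_1+\cdots+\gamma_{m_n}},u,v}\|^2_{L^2(N_n/S_n)} = \frac{\|u\|^2\|v\|^2}{|P_n(\gamma)|},
\end{equation}
since $P_n(\gamma)$ in the notation of \eqref{c-d} is precisely the Pfaffian product $\Pf(\gamma)$ for the group $N_n$. The task is then to identify the norm $\|f_{\pi_\gamma,u,v}\|^2_{L^2(N/S_nN''_n)}$ with this quantity. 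Since $|f_{\pi_\gamma,u,v}|$ depends only on the coset $gS_nN''_n$, integration over $N/S_nN''_n$ of $|f_{\pi_\gamma,u,v}|^2$ is well defined, and the quotient $N/S_nN''_n$ is canonically identified with $N_n/S_n$ under the isomorphism $N_n \cong N/N''_n$. With the invariant measures normalized compatibly, the two integrals coincide, which gives the stated equality.

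\emph{The main obstacle} I anticipate is the normalization of the Haar measures so that the identification $N/S_nN''_n \cong N_n/S_n$ is an isometry of $L^2$ spaces, not merely a bijection introducing an extra constant. Because $N''_n = N_{n(\gamma),\infty}$ is an infinite product of the factors $L_q$ with $q > m_n$, one must verify that the Weil formula for the tower $N \to N/N''_n \to N/S_nN''_n$ produces exactly the measure on $N_n/S_n$ used in Theorem \ref{plancherel-general}, with no stray factor coming from the infinite-dimensional quotient. The cleanest route is to fix the convention, already implicit in the setup \eqref{newsetup}, that Haar measure on $N$ restricts to the chosen Haar measure on $N_n$ along the inclusion and pushes forward correctly under the projection $N \to N_n$; once that convention is in force, the equality of norms is immediate from the finite-dimensional result and the descent of $f_{\pi_\gamma,u,v}$ to $N_n$. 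I would therefore state this measure normalization explicitly before invoking \eqref{sq-orthogrel}, and then the proof is essentially a transport of structure through the isomorphism $N_n \cong N/N''_n$.
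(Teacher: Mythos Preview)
Your proposal is correct, but the paper argues differently.  You reduce to the finite--dimensional case in one stroke: since $\pi_\gamma$ is by construction the pullback of the stepwise square integrable representation of $N_{n(\gamma)}$ along $N \to N/N''_n \cong N_n$, the coefficient $f_{\pi_\gamma,u,v}$ descends to the coefficient $f_{\pi_{\gamma_1+\cdots+\gamma_{m_n}},u,v}$ on $N_n$, and then a single application of \eqref{sq-orthogrel} (Theorem~\ref{plancherel-general}) to $N_n$ with quasicenter $S_n$ gives the asserted norm.  The only real content left is the bookkeeping on Haar measures under $N/S_nN''_n \cong N_n/S_n$, which you address.  One small remark: the decomposition \eqref{newsetup} is the index--reversed form of \eqref{setup}, so invoking Theorem~\ref{plancherel-general} for $N_n$ strictly requires the relabeling $L_r \leftrightarrow L_{m_n-r+1}$; this is harmless but worth saying.

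The paper instead runs an induction on $n$.  The base case $n=1$ is \eqref{sq-orthogrel} for $N_1$; the inductive step writes $N_{n+1} = N_n \ltimes N_{n,n+1}$ and $\pi_{\gamma_{n+1}} = \pi_{\gamma_n}\widehat{\otimes}\pi'_{\gamma_{n,n+1}}$, then computes $\|f_{\pi_{\gamma_{n+1}},u\otimes x,v\otimes y}\|^2_{L^2(N_{n+1}/S_{n+1})}$ as an iterated integral, first over $N_n/S_n$ (inductive hypothesis) and then over $N_{n,n+1}/S_{n,n+1}$ (applying \eqref{sq-orthogrel} to $N_{n,n+1}$).  For the proposition itself your direct argument is shorter.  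What the paper's computation buys is the explicit tensor--factor structure of the coefficients: the same calculation immediately yields the restriction identity \eqref{restr}, namely $f_{\pi_{\gamma_{n+1}},u\otimes x,v\otimes y}|_{N_n} = \langle x,y\rangle\, f_{\pi_{\gamma_n},u,v}$, which is the engine behind Proposition~\ref{rescale} and the inverse systems of Section~\ref{sec4}.  So the inductive proof is doing double duty.
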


\begin{proof}
This is an induction on $n$.  The case $n = 1$ is (\ref{sq-orthogrel}).
Now go from $n$ to $n+1$.  Express $N_{n+1} = N_n \ltimes N_{n,n+1}$ where
$$
N_n = L_1L_2\cdots L_{m_n} \text{ and }
	N_{n,q} = L_{m_n+1}L_{m_n+2}\cdots L_{m_q}
	\text{ for } q > n.
$$
Then $S_{n+1} = S_n \times S_{n,n+1}$ where the quasi-centers
$$
S_n = Z_1Z_2\cdots Z_{m_n} \text{ and }
	S_{n,q} = Z_{m_n+1}Z_{m_n+2}\cdots Z_{m_q}
	\text{ for } q > n.
$$
Now let $\gamma_n \in \gt_n^*$ and $\gamma_{n,n+1} \in \gt_{n,n+1}^*$
where, as before, $\gt^*$ is the nonzero set of the Pfaffian in $\gs^*$.
Note that $\pi_{\gamma_n} \in \widehat{N_n}$ and $\pi_{\gamma_{n,n+1}}
\in \widehat{N_{n,n+1}}$ are stepwise square integrable.  Write
$\pi'_{\gamma_{n,n+1}}$ for the extension of $\pi_{\gamma_{n,n+1}}$ from
$N_{n,n+1}$ to $N_{n+1}$\,.  Let
$u,v \in \cH_{\pi_{\gamma_n}}$ and $x,y \in \cH_{\pi_{\gamma_{n,n+1}}}$
so $u\otimes x, v \otimes y \in \cH_{\pi_{\gamma_{n+1}}}$\,.  Let
$a$ run over $N_n$ and let $b$ run over $N_{n,n+1}$\,.  Compute
$$
\begin{aligned}
&||f_{\pi_{\gamma_{n+1}},u\otimes x,v\otimes y}||^2_{L^2(N_{n+1}/S_{n+1})} \\
&\phantom{XX}= \int_{N_{n+1}/S_{n+1}} |\langle u\otimes x, (\pi_{\gamma_n}
	\widehat{\otimes} \pi'_{\gamma_{n,n+1}})(ab)(v\otimes y)\rangle|^2
	da\,db\\
&\phantom{XX}= \int_{N_{n+1}/S_{n+1}} |\langle u\otimes x,
	\pi_{\gamma_n}(a)\pi'_{\gamma_{n,n+1}}(b)(v) \otimes
	\pi_{\gamma_{n,n+1}}(b)(y)\rangle|^2 da\,db \\
&\phantom{XX}= \int_{N_{n,n+1}/S_{n,n+1}} \left ( \int_{N_n/S_n}
	|\langle u, \pi_{\gamma_n}(a)\pi'_{\gamma_{n,n+1}}(b)(v)\rangle |^2
	da \right )
	\cdot |\langle x, \pi_{\gamma_{n,n+1}}(b)(y)\rangle|^2 db \\
&\phantom{XX}= \int_{N_{n,n+1}/S_{n,n+1}} 
	\frac{||u||^2 ||\pi'_{\gamma_{n,n+1}}(b)(v)||^2}{|\Pf_n(\gamma_n)|}
	|\langle x, \pi_{\gamma_{n,n+1}}(b)(y)\rangle|^2 db \\
&\phantom{XX}= \int_{N_{n,n+1}/S_{n,n+1}} 
        \frac{||u||^2 ||v||^2}{|\Pf_n(\gamma_n)|}
        |\langle x, \pi_{\gamma_{n,n+1}}(b)(y)\rangle|^2 db \\
&\phantom{XX}= \frac{||u||^2 ||v||^2}{|\Pf_n(\gamma_n)|}\cdot
	\frac{||x||^2 ||y||^2}{|\Pf_{n,n+1}(\gamma_{n,n+1})|} = 
	\frac{||u\otimes x||^2 ||v \otimes y||^2}{|\Pf_{n+1}(\gamma_{n+1})|}.
\end{aligned}
$$
The proposition follows.
\end{proof}

In the notation of the proof of Proposition \ref{sq-orthogrel-quo},
\begin{equation}\label{restr}
f_{\pi_{\gamma_{n+1}},u\otimes x,v\otimes y}(a) 
        = \langle u, \pi_{\gamma_n}(a)v\rangle\cdot \langle x,y\rangle
        = \langle x,y\rangle f_{\pi_{\gamma_n},u,v}(a)
        \text{ for } a \in N_n\,.
\end{equation}
In other words, $f_{\pi_{\gamma_{n+1}},u\otimes x,v\otimes y}|_{N_n}
= \langle x,y\rangle f_{\pi_{\gamma_n},u,v}$\,.  In particular the case
where $x = e = y$, where $e$ is a unit vector, is
\begin{equation}\label{restr1}
f_{\pi_{\gamma_{n+1}},u\otimes e,v\otimes e}|_{N_n} = f_{\pi_{\gamma_n},u,v}
\end{equation}
\medskip

Iterating this and combining it with Proposition \ref{sq-orthogrel-quo}
we arrive at

\begin{proposition}\label{rescale}
Let $\gamma \in \gt^*$ and $n = n(\gamma)$.  Let $\gamma' \in \gt^*$
and $n' = n(\gamma')$ with $n' > n$ and $\gamma'|_{\gs_n} = \gamma$.  Then 
$\pi_{\gamma'}|_{N_n}$ is an infinite multiple of $\pi_\gamma$.
Split $\cH_{\pi_{\gamma'}} = \cH_{\pi_\gamma} \widehat{\otimes}\cH''$ where
$\cH'' = \cH_{\pi_{\gamma'_{n+1}}} \widehat{\otimes} \cdots \widehat{\otimes}
\cH_{\pi_{\gamma'_{n'}}}$ in the notation of {\rm (\ref{gamma-decomp})}.  
Choose a unit vector $e \in \cH''$.  Then
\begin{equation}\label{unitfactor}
\cH_{\pi_{\gamma}} \hookrightarrow \cH_{\pi_{\gamma'}} \text{ by }
v \mapsto v\otimes e
\end{equation}
is a well defined $N_n$--equivariant isometric injection.  If
$u, v \in \cH_{\pi_\gamma}$ then
\begin{equation}\label{rescale1}
||f_{\pi_{\gamma'},u\otimes e,v\otimes e}||^2_{L^2(
	N / S_{n'} N''_{n'})} 
	= \frac{|P_n(\gamma)|}{|P_{n'}(\gamma')|}
	||f_{\pi_\gamma,u,v}||^2_{L^2(N / S_n N''_n)}\,.
\end{equation}
\end{proposition}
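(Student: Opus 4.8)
The plan is to deduce everything from the tensor-product construction (\ref{def-sq-reps})--(\ref{gamma-decomp}) of $\pi_{\gamma'}$ together with the two facts already in hand: the single-step restriction formula (\ref{restr}) and the orthogonality relation of Proposition \ref{sq-orthogrel-quo}. First I would exploit the hypothesis $\gamma'|_{\gs_n} = \gamma$, which forces the block components to agree, $\gamma'_\ell = \gamma_\ell$ for every $\ell \leqq n$. Consequently the first $n$ tensor factors in the decomposition (\ref{gamma-decomp}) of $\cH_{\pi_{\gamma'}}$ are exactly those of $\cH_{\pi_\gamma}$, which gives the splitting $\cH_{\pi_{\gamma'}} = \cH_{\pi_\gamma} \widehat{\otimes} \cH''$ with $\cH'' = \cH_{\pi_{\gamma'_{n+1}}} \widehat{\otimes} \cdots \widehat{\otimes} \cH_{\pi_{\gamma'_{n'}}}$, as asserted; since each extra factor $\cH_{\pi_{\gamma'_\ell}}$ ($\ell > n$) is the space of a square integrable representation and hence infinite-dimensional, $\cH''$ is infinite-dimensional.

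Next I would iterate (\ref{restr}). One application carries a coefficient of $\pi_{\gamma_{k+1}}$ on $N_{k+1}$, evaluated at points of $N_k$, to the corresponding coefficient of $\pi_{\gamma_k}$ on $N_k$; running this down the chain from level $n'$ to level $n$, and extending from product vectors to all of $\cH''$ by bilinearity and continuity, yields for $a \in N_n$, $v,w \in \cH_{\pi_\gamma}$ and $f,f' \in \cH''$ the identity $\langle w \otimes f', \pi_{\gamma'}(a)(v \otimes f)\rangle_{\pi_{\gamma'}} = \langle w, \pi_\gamma(a)v\rangle_{\pi_\gamma}\,\langle f', f\rangle$. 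This says precisely that $\pi_{\gamma'}(a) = \pi_\gamma(a) \otimes \mathrm{Id}_{\cH''}$ for all $a \in N_n$, so $\pi_{\gamma'}|_{N_n} = \pi_\gamma \otimes \mathrm{Id}_{\cH''}$ is the $(\dim \cH'')$-fold, hence infinite, multiple of $\pi_\gamma$. Specializing $f = f' = e$ for the chosen unit vector gives $\pi_{\gamma'}(a)(v \otimes e) = (\pi_\gamma(a)v)\otimes e$, so the map (\ref{unitfactor}), $v \mapsto v \otimes e$, is $N_n$-equivariant; it is an isometric injection because $\|v \otimes e\| = \|v\|\,\|e\| = \|v\|$.

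Finally the rescaling formula (\ref{rescale1}) drops out of Proposition \ref{sq-orthogrel-quo} applied twice. At level $n'$, using $\|u\otimes e\| = \|u\|$ and $\|v\otimes e\| = \|v\|$, it gives $\|f_{\pi_{\gamma'},u\otimes e,v\otimes e}\|^2_{L^2(N/S_{n'}N''_{n'})} = \|u\|^2\|v\|^2/|P_{n'}(\gamma')|$, while at level $n$ it gives $\|f_{\pi_\gamma,u,v}\|^2_{L^2(N/S_n N''_n)} = \|u\|^2\|v\|^2/|P_n(\gamma)|$. Dividing the first by the second cancels $\|u\|^2\|v\|^2$ and leaves exactly the factor $|P_n(\gamma)|/|P_{n'}(\gamma')|$, which is (\ref{rescale1}). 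I expect the one point requiring genuine care to be the iteration in the second paragraph: the single-step formula (\ref{restr}) must be applied compatibly along the whole chain $N_{n'} \supset \cdots \supset N_n$, checking that each intermediate Mackey extension $\pi'_{\gamma'_{k,k+1}}$ restricts so as to leave the unit-vector factor fixed, and that the resulting coefficient identity passes from product vectors to arbitrary elements of the completed tensor product $\cH''$. Once that bookkeeping is in place, every remaining step is a direct substitution.
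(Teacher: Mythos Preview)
Your proposal is correct and is exactly the argument the paper has in mind: the paper's own ``proof'' is the single sentence preceding the proposition, ``Iterating this and combining it with Proposition \ref{sq-orthogrel-quo} we arrive at,'' where ``this'' is the restriction formula (\ref{restr})--(\ref{restr1}). You have simply written out the details the paper leaves implicit, including the observation that $\pi_{\gamma'}|_{N_n} = \pi_\gamma \otimes \mathrm{Id}_{\cH''}$ and the two applications of Proposition \ref{sq-orthogrel-quo} that yield (\ref{rescale1}).
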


\section{Hilbert Space Limits}
\label{sec4}
\setcounter{equation}{0}
Now we combine the restriction maps of Section \ref{sec3}.
Let $\gamma \in \gt^*$ and $n = n(\gamma)$.  Then $\gamma$ defines a
unitary character
\begin{equation}\label{defrelchar}
\zeta_\gamma = \exp(2\pi i \gamma) \text{ by } \zeta_\gamma(\exp(\xi)y) = 
	e^{2\pi i \gamma(\xi)} \text{ where } \xi \in \gs_n \text{ and }
	y \in N''_n\,.
\end{equation}
That defines the Hilbert space
\begin{equation}\label{defrelHilbert}
\begin{aligned}
L^2&(N/S_nN''_n,\zeta_\gamma) = \\
&\{f:N \to \C \mid f(gx) = \zeta_\gamma(x)^{-1}
	f(g) \text{ and } |f| \in L^2(N / S_n N''_n) \text{ for all }
	g \in N \text{ and } x \in N''_n\}.
\end{aligned}
\end{equation}
The finite linear combinations of the coefficients 
$f_{\pi_\gamma, u, v}$ (where $u, v \in \cH_{\pi_\gamma}$) form a dense
subspace of $L^2(N/S_nN''_n,\zeta_n$), and that gives an
$N \times N$ equivariant Hilbert space isomorphism
\begin{equation}
L^2(N/S_nN''_n,\zeta_\gamma) \cong 
	\cH_{\pi_\gamma} \widehat{\otimes} \cH_{\pi_\gamma}^*.
\end{equation}
\medskip

We know that the stepwise square integrable group $N_n = N/N_n''$ satisfies
\begin{equation}
L^2(N_n) = L^2(N/N_n'') 
= \int_{\gamma \in \gt^*\,\,  and\,\, n(\gamma) = n}
   (\cH_{\pi_\gamma} \widehat{\otimes} \cH_{\pi_\gamma}^*) |P_n(\gamma)|d\gamma.
\end{equation}
In brief, that expands the functions on $N$ that depend only on the first
$m(n)$ factors in $N = N_1N_2N_3\cdots$\,.  To expand the functions that
depend on more factors, say the first $m(n')$ factors in the notation of
Proposition \ref{rescale}, we would like to inject
$$L^2(N/N_n'') = 
  \int_{\gamma \in \gt_n^*} L^2(N/S_nN''_n,\zeta_\gamma)|P_n(\gamma)|d\gamma
$$
into
$$ 
L^2(N/N_{n'}'') = 
  \int_{\gamma' \in \gt_{n'}^*}L^2(N/S_{n'}N''_{n'},\zeta_{\gamma'})
	|P_{n'}(\gamma')|d\gamma'
$$
using the renormalizations of (\ref{rescale1}).  However, $\gamma$ has
many extensions $\gamma'$ with the given $n(\gamma') = n'$, so this will
not work directly.  But we can take the orthogonal projections dual to
the injections of (\ref{rescale1}) and form an inverse system of Hilbert
spaces.  
\medskip

To start, if $u, v \in \cH_{\pi_\gamma}$ and $x, y \in \cH''$, using
(\ref{restr}) and Proposition \ref{rescale}, 
\begin{equation}\label{rescale2}
p_{\gamma',\gamma}: f_{\pi_{\gamma'},u\otimes x,v\otimes y} \mapsto
\langle x, y \rangle \left |\frac{P_n(\gamma)}{P_{n'}(\gamma')}\right |^{1/2} 
f_{\pi_\gamma,u,v}
\end{equation}
is the orthogonal projection dual to the isometric inclusion (\ref{unitfactor}).
Since $\gamma$ is the restriction of $\gamma'$ from $\gs_{n(\gamma')}$ to
$\gs_{n(\gamma)}$ we can reformulate (\ref{rescale2}) as
\begin{equation}\label{rescale3}
p_{\gamma',n} : f_{\pi_{\gamma'},u\otimes x,v\otimes y} \mapsto
	\langle x, y \rangle 
	\left | \frac{P_n(\gamma'|_{\gs_n})}{P_{n'}(\gamma')} \right |^{1/2}
	f_{\pi_{\gamma'|_{\gs_n},u,v}} \text{ where } n = n(\gamma).
\end{equation}
The maps $p_{\gamma,n}$ of (\ref{rescale3}) sum to a Hilbert space projection,
essentially restriction of coefficients,
\begin{equation}\label{rescale4}
p_{n',n} = \left ( \int_{\gamma'\in\gs^*_{n'}}p_{\gamma',n}\,d\gamma'\right ) : 
	L^2(N_{n'}) \to
	L^2(N_n) \text{ where } n = n(\gamma'|_{\gs_n})
	\text{ and } n' = n(\gamma') \geqq n.
\end{equation}
The maps $p_{n',n}$ of (\ref{rescale4}) define an inverse system in the
category of Hilbert spaces and partial isometries:
\begin{equation}\label{rescale5}
L^2(N_1) \overset{p_{2,1}}{\longleftarrow} L^2(N_2) 
	\overset{p_{3,2}}{\longleftarrow}  L^2(N_3) 
	\overset{p_{4,3}}{\longleftarrow} \,\, ... \,\,\,\longleftarrow\,
	L^2(N)
\end{equation}
where the projective limit $L^2(N) := \varprojlim \{L^2(N_n),p_{n',n}\}$ 
is taken in the category
of Hilbert spaces and partial isometries.  We now have the Hilbert space
\begin{equation}\label{rescale6}
L^2(N) := \varprojlim \{L^2(N_n), p_{n',n}\}.
\end{equation}

\section{The Schwartz Spaces}
\label{sec5}
\setcounter{equation}{0}
In order to refine (\ref{rescale6}) to a Fourier
inversion formula we must first
make it more explicit.  The span $\cA(\pi_{\gamma_n})$ of the 
coefficients of the representation $\pi_{\gamma_n}$ is dense in
the space of functions on $N_n$ given by
$\cH_{\pi_{\gamma_n}} \widehat{\otimes} \cH_{\pi_{\gamma_n}}$\,. 
The idea in the background here is to realize Schwartz class functions
as wave packets
$
f(a) = \int_{\gs_n^*} \varphi(\gamma_n) f_{\pi_{\gamma_n},u(\gamma_n),
v(\gamma_n)}(a)d\gamma_n
$
where $\varphi$ is a Schwartz class function on $\gs_n$ and where 
$u(\gamma_n)$ and $v(\gamma_n)$ are fields of 
$C^\infty$ unit vectors in the $\cH_{\pi_{\gamma_n}}$.
More concretely we show that the coefficient
$f_{\pi_{\gamma_n,u,v}}$ belongs to an appropriate Schwartz space (and thus
an appropriate $L^1$ space) when $u$ and $v$ are $C^\infty$ vectors for
$\pi_{\gamma_n}$\,.
\medskip

We first collect some standard facts from Kirillov theory concerning the 
analog of the Schr\" odinger representation of the Heisenberg group.
Let $L$ be a connected simply connected nilpotent Lie group that has
square integrable representations.  $Z$ is the center of $L$, and
$\lambda \in \gz^*$ with $\Pf_\gl(\lambda) \ne 0$.  Let $\gp$ and $\gq$ 
be totally real polarizations for $\lambda$, $\gp = \gz + \ga$ and
$\gq = \gz + \gb$, and suppose that we chose them so that 
$b_\lambda(x,y) = \lambda([x,y])$ gives  a nondegenerate pairing of
$\ga$ with $\gb$.  In this setting, the square integrable representation
$\pi_\lambda$ of $L$ is $\Ind_P^N(\exp(2\pi i \lambda)$, and it
represents $L$ on $L^2(N/P) = L^2(B)$.  Further, here $\pi_\lambda$
maps the universal enveloping algebra $\cU(\gl)$ onto the set of all
polynomial (in linear coordinates from $\exp: \gb \to B$) differential operators
on $B$.  In particular,
\begin{lemma}\label{sch-vectors-sqint}
The $C^\infty$ vectors for the representation $\pi_\lambda$ are the
Schwartz class functions on $B$.  In other words if $p$ and $q$
are polynomials on $B$\,, if $D$ is a constant coefficient differential
operator on $B$\,, and if $u: B \to \C$ is a $C^\infty$ vector
for $\pi_\lambda$\,, then $|q(x)p(D)u|$ is bounded.
\end{lemma}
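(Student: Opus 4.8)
The plan is to prove the sharper statement that the space $\cH_{\pi_\lambda}^\infty$ of $C^\infty$ vectors for $\pi_\lambda$ coincides \emph{exactly} with the Schwartz space $\cS(B)$. Once that identification is established, the asserted bound is immediate: for $u\in\cS(B)$ the function $q(x)p(D)u$ again lies in $\cS(B)$, since $\cS(B)$ is stable under multiplication by the polynomial $q$ and under the constant-coefficient operator $p(D)$, and every Schwartz function is bounded. So the whole content is the equality $\cH_{\pi_\lambda}^\infty=\cS(B)$, and I would reduce everything to that.

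For the identification I would exploit the surjectivity recorded just above the statement: $d\pi_\lambda$ carries $\cU(\gl)$ onto the set of all differential operators on $B$ with polynomial coefficients in the linear coordinates coming from $\exp:\gb\to B$. Writing those coordinates as $x=(x_1,\dots,x_d)$ with $d=\dim\gb$, the harmonic oscillator $H=\sum_j(x_j^2-\partial_j^2)$ is such an operator, so there is $C\in\cU(\gl)$ with $d\pi_\lambda(C)=H$. First I would record the easy inclusion $\cS(B)\subseteq\cH_{\pi_\lambda}^\infty$: for $u\in\cS(B)$ every $d\pi_\lambda(T)$, $T\in\cU(\gl)$, is a polynomial-coefficient operator and hence maps $\cS(B)$ into $\cS(B)\subset L^2(B)$, while the explicit induced-representation (Schr\"odinger) realization of $\pi_\lambda$ on $L^2(B)$ shows that each $\pi_\lambda(g)$ preserves $\cS(B)$ and that the orbit map $g\mapsto\pi_\lambda(g)u$ is smooth into $L^2(B)$. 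Conversely, if $u\in\cH_{\pi_\lambda}^\infty$ then $H^k u=d\pi_\lambda(C^k)u\in L^2(B)$ for every $k$, so $u\in\bigcap_k\mathrm{Dom}(H^k)$.

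Then I would invoke the classical spectral description of the oscillator: $H$ is essentially self-adjoint on $\cS(B)$, positive, with orthonormal eigenbasis the Hermite functions $h_\alpha$ and eigenvalues $2|\alpha|+d$; expanding $u=\sum_\alpha c_\alpha h_\alpha$ one has $u\in\mathrm{Dom}(H^k)$ iff $\sum_\alpha|c_\alpha|^2(2|\alpha|+d)^{2k}<\infty$, and the standard Hermite characterization of Schwartz space yields $\bigcap_k\mathrm{Dom}(H^k)=\cS(B)$. Chaining the inclusions $\cS(B)\subseteq\cH_{\pi_\lambda}^\infty\subseteq\bigcap_k\mathrm{Dom}(H^k)=\cS(B)$ forces equality, which is the Lemma, and the boundedness of $q(x)p(D)u$ follows as noted. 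The hard part will be the passage $\bigcap_k\mathrm{Dom}(H^k)=\cS(B)$ together with checking that powers of the \emph{single} operator $H$ suffice to control every Schwartz seminorm — equivalently that the $L^2$ bounds on $H^k u$ upgrade, via the rapid decay of Hermite coefficients (or Sobolev embedding), to the $L^\infty$ decay bounds $\sup_x|q(x)p(D)u(x)|<\infty$. This is exactly where the ellipticity of $H$ does the work, and where the hypothesis $\Pf_\gl(\lambda)\ne 0$ is used, since it is what guarantees the nondegenerate pairing of $\ga$ with $\gb$ and hence the genuine oscillator $H$ inside the image of $d\pi_\lambda$.
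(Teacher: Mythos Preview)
Your argument is correct and is precisely the standard way to flesh out this fact. Note, however, that the paper does not actually give a proof of this lemma: it is presented as one of the ``standard facts from Kirillov theory'' announced at the start of the section, and the lemma is introduced simply by the phrase ``In particular,'' immediately following the key assertion that $\pi_\lambda$ maps $\cU(\gl)$ onto the algebra of all polynomial-coefficient differential operators on $B$. So there is nothing to compare against beyond that single sentence.

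Your proof uses exactly that surjectivity statement as its input, and then supplies the missing analytic details via the harmonic oscillator $H=\sum_j(x_j^2-\partial_j^2)$ and the Hermite characterization $\bigcap_k\mathrm{Dom}(H^k)=\cS(\R^d)$. This is the textbook route (essentially the argument for the Heisenberg group, or equivalently the Corwin--Greenleaf/Howe treatment for general nilpotent groups), and your chain of inclusions $\cS(B)\subseteq\cH_{\pi_\lambda}^\infty\subseteq\bigcap_k\mathrm{Dom}(H^k)=\cS(B)$ is exactly right. The one small remark: for the inclusion $\cS(B)\subseteq\cH_{\pi_\lambda}^\infty$ you can bypass the explicit smoothness of the orbit map by invoking the characterization (valid for unitary representations of nilpotent Lie groups) that $u$ is a $C^\infty$ vector iff $d\pi_\lambda(T)u\in L^2$ for every $T\in\cU(\gl)$; since each $d\pi_\lambda(T)$ is a polynomial-coefficient operator and $\cS(B)$ is stable under those, this is immediate.
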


In order to extend this to stepwise square integrable representations we
must take into account the problem that $S_n$ need not be central in $N_n$\,.
We do this by decomposing 
% \begin{equation}\label{decomp_n_n}
% N_n \simeq L_{m(n)} \times \cdots \times L_2 \times L_1 
% \end{equation}
% where $\simeq$ is the measure preserving real analytic diffeomorphism given
% by the polynomial map
% \begin{equation}\label{decomp_exp_n}
% \exp':\gn_n \to N_n \text{ by } \exp'(\xi_{m(n)} + \dots + \xi_1)
% 	= \exp(\xi_{m(n)})\cdots\exp(\xi_2)\exp(\xi_1)
% \text{ where each } \xi_r \in \gl_r\,.
% \end{equation}
% Using the part of (\ref{newsetup}d) that says 
% $[\gl_r,\gz_s] = 0$ for $r < s$,
% the decomposition (\ref{decomp_n_n}) gives us 
% \begin{equation}\label{decomp_n_n_mod_s}
% \begin{aligned}
% N_n/S_n &\simeq\, S_n\backslash N_n = 
% 	\{Z_{m(n)} \cdots Z_2Z_1x_{m(n)}\cdots x_2x_1\mid x_r \in L_r\} \\
% &= \{x_{m(n)}Z_m(n)\cdots x_2Z_2x_1Z_1\mid x_r \in L_r\}
% 	\simeq (L_1/Z_1) \times \cdots \times (L_{m(n)}/Z_{m(n)}).
% \end{aligned}
% \end{equation}
\begin{equation}\label{decomp_n_n}
N_n \simeq L_1 \times \cdots \times L_{m(n)}
% N_n/S_n \simeq (L_1/Z_1) \times \cdots \times (L_{m(n)}/Z_{m(n)}).
\end{equation}
where $\simeq$ is the measure preserving real analytic diffeomorphism given
by the polynomial map
\begin{equation}\label{decomp_exp_n}
\exp':\gn_n \to N_n \text{ by } \exp'(\xi_1 + \dots + \xi_{m(n)})
        = \exp(\xi_1)\exp(\xi_2)\cdots\exp(\xi_{m(n)})
\text{ where each } \xi_r \in \gl_r\,.
\end{equation}
Using the part of (\ref{newsetup}d) that says $[\gl_r,\gz_s] = 0$ for $r < s$
the decomposition (\ref{decomp_n_n}) gives us
\begin{equation}\label{decomp_n_n_mod_s}
\begin{aligned}
N_n/S_n &= \{x_{m(n)}\cdots x_2x_1Z_{m(n)}\cdots Z_2Z_1\mid x_r \in L_r\}\\
&= \{x_{m(n)}Z_{m(n)} \cdots x_2Z_2x_1Z_1\mid x_r \in L_r\}
= (L_{m(n)}/Z_{m(n)}) \times \cdots \times (L_1/S_1) \\
&\simeq (L_1/S_1) \times \cdots \times (L_{m(n)}/Z_{m(n)}).
\end{aligned}
\end{equation}
Now let $\gamma_n = \lambda_1 + \cdots + \lambda_{m(n)} \in \gt^*_n$\,.
Let $\gp_r$ and $\gq_r$ be totally real polarizations on $\gl_r$ for
$\lambda_r$\,, paired as above by $b_{\lambda_r}$\,.  We do not claim
that $\gp = \sum \gp_r$ and $\gq = \sum \gq_r$ are polarizations on $\gn_n$
for $\gamma_n$\, (we don't know that they are algebras), but still
$\gp_r = \gz_r + \ga_r$ and $\gq_r = \gz_r + \gb_r$ where $b_{\lambda_r}$ 
pairs $\ga_r$ with $\gb_r$\,, so $b_{\gamma_n}$ is
a nondegenerate pairing of $\ga = \sum \ga_r$ with $\gb = \sum \gb_r$\,.
Now the stepwise square integrable representation $\pi_{\gamma_n}$ of $N_n$
is realized on $L^2(B)$ where $B = \exp'(\gb)$ in the notation of
(\ref{decomp_exp_n}).  Again, in this setting, $\pi_{\gamma_n}$
maps the universal enveloping algebra of $\gn_n$ onto the set of all
polynomial (in linear coordinates from $\exp': \gb \to B$) 
differential operators on $B$.  This extends Lemma \ref{sch-vectors-sqint}
to
\begin{lemma}\label{sch-vectors}
Identify $B = \exp'(\gb)$ with the real vector space $\gb$.
The $C^\infty$ vectors for the representation $\pi_{\gamma_n}$ are the
Schwartz class functions on $B$.  In other words if $p$ and $q$
are polynomials on $B$\,, if $D$ is a constant coefficient differential
operator on $B$\,, and if $u: B \to \C$ is a $C^\infty$ vector
for $\pi_{\gamma_n}$\,, then $|q(x)p(D)u|$ is bounded.
\end{lemma}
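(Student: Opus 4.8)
The plan is to reduce the stepwise square integrable case to Lemma \ref{sch-vectors-sqint} by passing through the universal enveloping algebra, which is exactly what the surjectivity statement immediately preceding the lemma is there to permit. I would start from the standard description (Gårding, Nelson, Poulsen) of the space $\cH^\infty$ of $C^\infty$ vectors of a unitary representation $\pi$ of a connected Lie group: a vector $u$ is smooth if and only if $d\pi(X)u$ is defined and lies in the representation space for every $X$ in the universal enveloping algebra $\cU(\gn_n)$, and the Fréchet topology on $\cH^\infty$ is given by the seminorms $u \mapsto \|d\pi(X)u\|$. The first step is therefore to rephrase ``smooth vector for $\pi_{\gamma_n}$'' as ``$d\pi_{\gamma_n}(X)u \in L^2(B)$ for all $X \in \cU(\gn_n)$''.

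Next I would invoke the surjectivity recorded just above the statement: $d\pi_{\gamma_n}$ carries $\cU(\gn_n)$ onto the algebra of all differential operators on $B$ with polynomial coefficients in the linear coordinates coming from $\exp':\gb \to B$. Consequently the smoothness condition becomes $Tu \in L^2(B)$ for every such operator $T$, and in particular $q(x)p(D)u \in L^2(B)$ for all polynomials $q$ and all constant coefficient operators $p(D)$, since multiplications by polynomials together with constant coefficient operators generate the full polynomial coefficient algebra. This is precisely the $L^2$-based family of seminorms that characterizes the Schwartz space $\mathcal{S}(\gb)$. For the two inclusions: if $u \in \mathcal{S}(\gb)$ then every polynomial coefficient operator maps $u$ back into $\mathcal{S}(\gb) \subset L^2(B)$ with the requisite continuity, so $u \in \cH^\infty$; conversely a smooth vector satisfies all of these $L^2$ seminorm bounds and hence lies in $\mathcal{S}(\gb)$. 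The passage from membership in $L^2$ to the boundedness of $|q(x)p(D)u|$ asserted in the statement is then the standard Sobolev embedding, since controlling finitely many further $L^2$ derivatives controls the sup norm.

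The main obstacle lies not in the lemma proper but in the surjectivity statement it rests on, and that is exactly where the non-centrality of $S_n$ enters. Because $\pi_{\gamma_n}$ is built as the tensor product $\pi_{\lambda_1 + \cdots + \lambda_{m_n-1}} \widehat{\otimes} \pi'_{\lambda_{m_n}}$ rather than as a single induced representation, one must check that all of $\cU(\gn_n)$ --- not merely the enveloping algebra of each factor --- already produces every polynomial coefficient operator on $B = B_1 \times \cdots \times B_{m(n)}$, using the bracket relations $[\gl_r,\gz_s]=0$ and $[\gl_r,\gl_s]\subset\gv$ of (\ref{newsetup}d) to see that the cross terms do not obstruct surjectivity. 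Granting that, the compatibility I would finally record is that the Schwartz space of the product factors, $\mathcal{S}(\gb) = \mathcal{S}(\gb_1)\widehat{\otimes}\cdots\widehat{\otimes}\mathcal{S}(\gb_{m(n)})$, matches the product realization of $B$, so that the Schwartz description is consistent with the tensor product decomposition (\ref{gamma-decomp}) of $\cH_{\pi_{\gamma_n}}$.
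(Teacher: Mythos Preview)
Your proposal is correct and follows essentially the same approach as the paper. The paper does not give a separate proof of this lemma at all: it simply asserts the surjectivity of $\pi_{\gamma_n}$ from $\cU(\gn_n)$ onto the polynomial coefficient differential operators on $B$ in the sentence immediately preceding the statement, and then declares that ``this extends Lemma~\ref{sch-vectors-sqint}'' --- i.e., once that surjectivity is in hand, the characterization of $C^\infty$ vectors as Schwartz functions carries over verbatim from the square integrable case, which was itself stated without proof as a standard fact from Kirillov theory. Your write-up is more explicit about the mechanism (the G{\aa}rding--Nelson description of $\cH^\infty$, the $L^2$ Schwartz seminorms, Sobolev embedding), and you correctly identify that the real work lies in the surjectivity claim and its compatibility with the tensor product realization, but the underlying argument is the same one the paper is pointing to.
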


Now consider the Schwartz space analog of the definition (\ref{defrelHilbert}).
We define the {\em relative Schwartz space} $\cC(N/S_nN''_n,\zeta_\gamma)
= \cC(N_n/S_n,\zeta_{\gamma_n})$ to be
\begin{equation}\label{defrelSchwartz}
\begin{aligned}
\text{ all } f\in &C^\infty(N) \text{ such that}\\
	& f(xs) = \zeta_\gamma(s)^{-1} f(x) \text{ for all }
		x \in N_n \text{ and } s \in S_n\,,\text{ and } \\
	& |q(x)p(D)f| \text{ is bounded for all polynomials } 
        p,q \text{ on } N_n/S_n \text{ and all } D \in \cU(\gn_n).
\end{aligned}
\end{equation}
It is a nuclear Fr\' echet space and is dense in $L^2(N/S_nN''_n,\zeta_\gamma)
= L^2(N_n/S_n,\zeta_{\gamma_n})$.  
\medskip

We define 
$C_c^\infty(N/S_nN''_n,\zeta_\gamma)
= C_c^\infty(N_n/S_n,\zeta_{\gamma_n})$ as the space of functions
$f \in C^\infty(N)$ such that $f(xs) = \zeta_\gamma(s)^{-1} f(x)$ for all
$x \in N_n \text{ and } s \in S_n$\,, whose absolute values are compactly
supported modulo $S_nN''_n$ for $C_c^\infty(N/S_nN''_n,\zeta_\gamma)$,
modulo $S_n$ for $C_c^\infty(N_n/S_n,\zeta_{\gamma_n})$. It is
dense in the corresponding Schwartz space.  Thus we have the expected
continuous inclusions $C_c^\infty \hookrightarrow \cC \hookrightarrow L^2$
with dense images.

\begin{theorem}\label{coef-sch}
Let $u$ and $v$ be $C^\infty$ vectors for the stepwise square integrable
representation $\pi_{\gamma_n}$ of $N_n$\,.  Define $\zeta_\gamma$ and 
$\zeta_{\gamma_n}$
as in {\rm (\ref{defrelchar})}, and $A = \exp'(\ga)$ and $B = \exp'(\gb)$
as in the discussion following {\rm(\ref{decomp_n_n_mod_s})}.  
Then the coefficient function
$f_{\pi_{\gamma_n},u,v}$ belongs to the relative Schwartz space
$\cC(N/S_nN''_n,\zeta_\gamma) = \cC(N_n/S_n,\zeta_{\gamma_n})$\,.
\end{theorem}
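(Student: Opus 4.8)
The plan is to verify the three defining conditions of the relative Schwartz space (\ref{defrelSchwartz}) for $f := f_{\pi_{\gamma_n},u,v}$: the $S_n$-covariance, smoothness, and the polynomial seminorm bounds. The first two are essentially free. Since the quasicenter $S_n$ acts on $\cH_{\pi_{\gamma_n}}$ by the scalar character $\zeta_{\gamma_n}$, for $s\in S_n$ one has $f(xs)=\langle u,\pi_{\gamma_n}(x)\pi_{\gamma_n}(s)v\rangle=\overline{\zeta_{\gamma_n}(s)}\,f(x)=\zeta_{\gamma_n}(s)^{-1}f(x)$, which is the required covariance; and a coefficient of $C^\infty$ vectors is a $C^\infty$ function on $N/N''_n\cong N_n$. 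For the seminorm bounds I would first reduce away the differential operators: for $D\in\cU(\gn_n)$ realized as a left-invariant operator, $Df=f_{\pi_{\gamma_n},u,\pi_{\gamma_n}(D)v}$, and $\pi_{\gamma_n}(D)v$ is again a $C^\infty$ vector, so $Df$ is once more a coefficient of $C^\infty$ vectors. Hence the theorem reduces to the single claim that $q\cdot f_{\pi_{\gamma_n},u,v}$ is bounded for every polynomial $q$ on $N_n/S_n$ and all $C^\infty$ vectors $u,v$.

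Next I would work in the model $\cH_{\pi_{\gamma_n}}=L^2(B)$, in which Lemma \ref{sch-vectors} identifies the $C^\infty$ vectors $u,v$ with Schwartz functions on $B=\exp'(\gb)$. Writing $g=\exp'(\xi)$ in the coordinates of (\ref{decomp_exp_n}) and using that $\pi_{\gamma_n}$ maps $\cU(\gn_n)$ onto all polynomial-coefficient differential operators on $B$, the operator $\pi_{\gamma_n}(\exp'(\xi))$ decomposes into translations in the $\gb$-directions, multiplications by unitary characters linear in the $\ga$-coordinates, and the central scalar on the $\gz$-directions, with the off-diagonal brackets $[\gl_r,\gl_s]\subset\gv$ $(r<s)$ contributing additional unitary phase factors. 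Thus the coefficient takes the form
$$
f_{\pi_{\gamma_n},u,v}(\exp'(\xi))=\int_B \overline{u(y)}\,e^{2\pi i\,\Phi(\xi,y)}\,v\big(y-\beta(\xi)\big)\,dy,
$$
where $\beta$ is affine in the coordinates and $\Phi$ is a phase, polynomial in $(\xi,y)$.

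Finally I would prove the decay. In the single square integrable factor case (the setting of Lemma \ref{sch-vectors-sqint}) the displayed integral is exactly a cross-ambiguity function: it is the partial Fourier transform in $y$ of the function $(y,\beta)\mapsto\overline{u(y)}\,v(y-\beta)$, which is Schwartz on $B\times B$, so the coefficient is Schwartz on $L_r/Z_r$ and multiplication by any polynomial leaves it bounded. For the general stepwise representation I would stack these estimates along the product decomposition $N_n/S_n\simeq(L_1/S_1)\times\cdots\times(L_{m(n)}/Z_{m(n)})$ of (\ref{decomp_n_n_mod_s}), inducting on $m(n)$ in the spirit of Proposition \ref{sq-orthogrel-quo}. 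The main obstacle is exactly that $S_n$ is not central, so $\pi_{\gamma_n}$ is not a plain tensor product of independent Schr\"odinger representations: the brackets $[\gl_r,\gl_s]\subset\gv$ with $r<s$ couple the coordinate blocks through the phase $\Phi$. The crux is to verify that these couplings enter only as polynomial phases together with unipotent (hence polynomial) coordinate changes, so that the block-by-block Fourier-transform argument still returns a function of rapid decay in every coordinate of $N_n/S_n$; the identification of $\pi_{\gamma_n}(\cU(\gn_n))$ with the full algebra of polynomial-coefficient differential operators on $B$, noted just before Lemma \ref{sch-vectors}, is what guarantees that each intermediate expression remains Schwartz.
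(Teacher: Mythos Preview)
Your outline is sound and would eventually succeed, but it takes a substantially longer route than the paper and leaves the hardest step (the stepwise induction with cross-terms) only sketched.  The paper's argument bypasses your entire ambiguity-function computation by using the \emph{two-sided} enveloping algebra action rather than only the right action.  You correctly observe that the right action absorbs $p(D)$ into $v$, but you then launch an explicit Fourier-analytic attack on the remaining polynomial factor $q$.  The paper instead notes that, since $\pi_{\gamma_n}$ carries $\cU(\gn_n)$ onto \emph{all} polynomial differential operators on $B$---in particular onto all multiplication operators by polynomials---the combined left/right action $D\,f_{u,v}\,E = f_{\pi(D)u,\,\pi(E)v}$ (with $u$ viewed in $\cC(A)$ via the dual polarization and $v$ in $\cC(B)$) already realizes every polynomial differential operator on $A\times B \simeq N_n/S_n$.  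Hence any $q(x)p(D)f_{u,v}$ is a finite sum of coefficients $f_{\pi(D')u,\,\pi(E')v}$, and each of those is bounded by Cauchy--Schwarz: $|f_{\pi(D')u,\,\pi(E')v}|\le \|\pi(D')u\|\,\|\pi(E')v\|$.  That is the whole proof.

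What each approach buys: the paper's trick reduces the Schwartz estimate to a one-line norm bound and needs no induction on $m(n)$ and no explicit integral kernel; in particular it never has to confront the block couplings $[\gl_r,\gl_s]\subset\gv$ that you flag as the main obstacle.  Your approach, by contrast, produces an explicit oscillatory-integral formula for the coefficient, which is more informative structurally and is the natural route if one did not know that $\pi_{\gamma_n}(\cU(\gn_n))$ is the full Weyl algebra on $B$.  The cost is that your claimed form for $\Phi$ and $\beta$ (polynomial phase, affine shift) and the block-by-block induction both require nontrivial verification in the stepwise setting; as written these are plausible assertions rather than proofs.  If you want to complete your version, the cleanest fix is simply to import the paper's observation: multiplication by $q$ on $N_n/S_n$ is already in the two-sided image of $\cU(\gn_n)\otimes\cU(\gn_n)$, so no separate decay argument is needed.
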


\begin{proof}
Write $f_{u,v}$ for $f_{\pi_{\gamma_n},u,v}$ and $\pi$ for 
$\pi_{\gamma_n}$\,.  So $f_{u,v}(x) = \langle u, \pi(x)v\rangle$.
The left/right action of the enveloping algebra is $Df_{u,v}E = 
f_{\pi(D)u,\pi(E)v}$.  View $u \in \cC(A)$ and $v \in \cC(B)$.
Here $\pi(D)u$ is the image of $u$ under the (arbitrary) polynomial
differential operator $\pi(D)$ on $A$ and $\pi(E)v$ is the
image of $v$ under the (arbitrary) polynomial differential operator
$\pi(D)$ on $B$.  Together they give the image of $f_{u,v}$ under
the polynomial differential operator $\pi(D) \otimes \pi(E)$
on $A \times B = N_n/S_n$\,.  Every polynomial differential operator
on $A \times B$ is a finite sum of such operators $\pi(D) \otimes \pi(E)$.
Since coefficients are bounded, here
$|f_{\pi(D)u,\pi(E)v}(x)| \leqq ||\pi(D)u||\cdot ||\pi(E)v||$, and
since $f_{\pi(D)u,\pi(E)v}(xs) = \zeta(s)^{-1}f_{\pi(D)u,\pi(E)v}(x)$,
the coefficient $f_{u,v} \in \cC(N_n/S_n,\zeta_{\gamma_n})$.
\end{proof}

\begin{corollary}\label{l1-coef1}
Let $u$ and $v$ be $C^\infty$ vectors for the stepwise square integrable
representation $\pi_{\gamma_n}$ of $N_n$\,.  Then the coefficient function
$f_{\pi_{\gamma_n},u,v} \in L^1(N_n/S_n,\zeta_{\gamma_n})$\,.
\end{corollary}

\begin{corollary}\label{l1-coef2}
Let $L$ be a connected simply connected nilpotent Lie group, $Z$ its center, 
and $\pi$
a square integrable representation of $L$.  Let $\zeta \in \widehat{Z}$
such that $\pi|_Z$ is a multiple of $\zeta$.  Let $u$ and $v$ be $C^\infty$
vectors for $\pi$.  Then the coefficient $f_{\pi,u,v} \in
L^1(L/Z,\zeta)$.
\end{corollary}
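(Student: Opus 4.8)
The plan is to recognize Corollary \ref{l1-coef2} as the single-factor specialization of the stepwise machinery just developed. A connected simply connected nilpotent Lie group $L$ with square integrable representations is exactly the case $m = 1$ of the setup (\ref{newsetup}): take the one factor $L_1 = L$, so that $N_n = L$, the quasicenter $S_n = Z_1 = Z$ is the genuine center, the complementary normal subgroup $N''_n$ is trivial, and the stepwise square integrable representation $\pi_{\gamma_1}$ reduces to the ordinary square integrable $\pi = \pi_\lambda$ with central character $\zeta = \exp(2\pi i \lambda)$. Under this identification the relative character $\zeta_\gamma$ of (\ref{defrelchar}) is the given $\zeta$, and the relative Schwartz space $\cC(N_n/S_n,\zeta_{\gamma_n})$ becomes $\cC(L/Z,\zeta)$.

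With this dictionary in place, I would apply Theorem \ref{coef-sch}, whose $m = 1$ instance is precisely Lemma \ref{sch-vectors-sqint}. Choosing totally real polarizations $\gp = \gz + \ga$ and $\gq = \gz + \gb$ paired nondegenerately by $b_\lambda$, the representation is realized on $L^2(B)$ with $B = \exp(\gb)$, and $\pi(\cU(\gl))$ exhausts the polynomial differential operators on $B$. In the single-factor case the decompositions (\ref{decomp_n_n}) and (\ref{decomp_n_n_mod_s}) collapse, so that $N_n/S_n$ is just $L/Z$ and no alignment bookkeeping is needed. Since $u$ and $v$ are $C^\infty$ vectors, Theorem \ref{coef-sch} gives $f_{\pi,u,v} \in \cC(L/Z,\zeta)$.

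It then remains to pass from the Schwartz space to $L^1$, which is the content of Corollary \ref{l1-coef1} in the stepwise case and specializes verbatim here: a Schwartz class function on the finite dimensional real vector space $\gb$ is integrable, since rapid decay---applied with the polynomial weight $(1 + |x|^2)^k$ for $k$ large---forces membership in $L^1(B) = L^1(L/Z,\zeta)$. Thus $f_{\pi,u,v} \in L^1(L/Z,\zeta)$. There is no genuine technical obstacle; the only conceptual point worth flagging is that square integrability by itself gives merely $L^2$ control on the noncompact quotient $L/Z$ and no $L^1$ information, and the upgrade to $L^1$ comes entirely from restricting to $C^\infty$ vectors. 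Via the Kirillov-theoretic identification of $\pi(\cU(\gl))$ with all polynomial differential operators on $B$ (Lemma \ref{sch-vectors-sqint}), these vectors force Schwartz decay of the coefficient, after which integrability is automatic.
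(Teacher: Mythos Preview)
Your proposal is correct and matches the paper's intent: Corollary \ref{l1-coef2} is presented without a separate proof precisely because it is the $m=1$ specialization of Theorem \ref{coef-sch} and Corollary \ref{l1-coef1}, and your argument spells out exactly that reduction. The only thing to note is that the paper treats both corollaries as immediate, so your level of detail exceeds what the paper supplies, but the logical route is identical.
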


Any norm $|\xi|$ on $\gn_n$ carries over to a norm $|\exp(\xi)| := |\xi|$
on $N_n$.  We have the standard Schwartz space $\cC(N_n)$, given by the 
seminorms $\nu_{k,D,E}(f) = \sup_{x \in N_n} |(1 + |x|^2)^k (DfE)(x)|$
where $k$ is a positive integer and $D, E \in \cU(\gn_n)$ acting on the
left and right.  Since $\exp: \gn_n \to N_n$ is a polynomial diffeomorphism
it gives a topological isomorphism of $\cC(N_n)$ onto the classical
Schwartz space $\cC(\gn_n)$.  Fourier transform and
inverse Fourier transform of Schwartz class functions on $S_n$.

\begin{remark}\label{make-relative}
{\rm If $\gamma_n \in \gs_n^*$ and $f \in \cC(N_n)$ define
$f_{\gamma_n}(x) = \int_{S_n} f(xs)\zeta_{\gamma_n}(s)ds$.  Then
$f_{\gamma_n} \in \cC(N_n/S_n,\zeta_{\gamma_n})$.
Let $z \in S_n$\,.  Since $S_n$ is commutative, 
$$f_{\gamma_n}(xz) = \int_{S_n} f(xzs)\zeta_{\gamma_n}(s)ds
= \int_{S_n} f(xsz)\zeta_{\gamma_n}(s)ds
= \int_{S_n} f(xs)\zeta_{\gamma_n}(z^{-1}s)ds
= \zeta_{\gamma_n}(z)^{-1}f_{\gamma_n}(x).
$$
Given $x \in N_n$ we view $f_{\gamma_n}(x)$ as a function on $\gs_n^*$
by $\varphi_x(\gamma_n) := f_{\gamma_n}(x)$.  Note that $\varphi_x$ is
(a multiple of) the Fourier transform of the left translate
$(\ell(x^{-1})f)|_{S_n}$\,, say $\cF_{S_n}(\ell(x^{-1})f)|_{S_n}$.  
The inverse Fourier $\cF_{S_n}^{-1}(\varphi_x)$ transform reconstructs
$f$ from the $f_{\gamma_n}$\,.   Each of the $f_{\gamma_n}$ is a limit
(in $\cC(N_n/S_n,\zeta_{\gamma_n})$) of finite linear combinations of
coefficient functions $f_{\pi_{\gamma_n}, u,v}$\,.  Thus every
$f \in \cC(N_n)$ is approximated (in $\cC(N_n)$) by wave packets of
coefficient functions of stepwise square integrable representations. }
\hfill $\diamondsuit$
\end{remark}

Proceeding as in Section \ref{sec4} let $n' \geqq n$ and consider
$\gamma_{n'} \in \gt^*_{n'}$ with $\gamma_{n'}|_{\gs_n} = \gamma_n$.  
For brevity write $\gamma = \gamma_n$ and $\gamma' = \gamma_{n'}$\,.
We reformulate
(\ref{rescale4}) through (\ref{rescale6}) for the Schwartz spaces.
\begin{equation}\label{rescale4s}
q_{n',n}: \cC(N_{n'}) \to \cC(N_n) \text{ by } f \mapsto f|_{N_n}\,. 
\end{equation}
The maps $q_{n',n}$ of (\ref{rescale4s}) define an inverse system in the
category of complete locally convex topological vector spaces
\begin{equation}\label{rescale5s}
\cC(N_1) \overset{q_{2,1}}{\longleftarrow} \cC(N_2) 
        \overset{q_{3,2}}{\longleftarrow}  \cC(N_3) 
        \overset{q_{4,3}}{\longleftarrow}  ... 
\end{equation}
We define the projective limit
\begin{equation}\label{rescale6s}
\cC(N) := \varprojlim \{\cC(N_n),q_{n',n}\}
\end{equation}
to be the Schwartz space of $N = \varinjlim N_n$\,.  This is dual to the 
construction of \cite[(2.20)]{W2010}.  Now we relate it to (\ref{rescale6}).
We scale the natural injections to maps
\begin{equation}\label{rescalecl1}
r_{n,\gamma}: \cC(N_n/S_n,\zeta_\gamma) \to L^2(N_n/S_n,\zeta_\gamma)
\text{ by } f \mapsto |\Pf_{\gn_n}(\gamma)|^{1/2}f\,.
\end{equation}
They sum to maps
\begin{equation}\label{rescalecl2}
r_n = \left ( \int_{\gs_n^*} r_{n,\gamma}\, d\gamma\right ) : 
	\cC(N_n) \to L^2(N_n)
\end{equation}
that are equivariant for the maps $p_{n',n}$ and $q_{n',n}$\,.  The 
arguments leading to \cite[Proposition 2.22]{W2010} can be dualized
from direct limits to projective limits.  Thus, dual to
\cite[Proposition 2.22]{W2010},
\begin{proposition}\label{compare}
The maps $r_n$ of {\rm (\ref{rescalecl2})} satisfy
$p_{n',n}\cdot r_{n'} = r_n\cdot q_{n',n}$ for $n' \geqq n$ and send the
inverse system $\{\cC(N_n),q_{n',n}\}$ into the inverse system
$\{L^2(N_n),p_{n',n}\}$.  That defines a continuous $N$--equivariant injection
$$
r: \cC(N) \to L^2(N)
$$
with dense image.  In particular $r$ defines a pre Hilbert space structure on
$\cC(N)$ with completion isometric to $L^2(N)$.
\end{proposition}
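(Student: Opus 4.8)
The plan is to verify the stated compatibility relation $p_{n',n}\cdot r_{n'} = r_n\cdot q_{n',n}$ directly on coefficient functions, deduce that $r$ is a well-defined continuous map of inverse limits, and then establish density and injectivity. The whole argument is the projective-limit dual of \cite[Proposition 2.22]{W2010}, so the strategy is to dualize each direct-limit step into a projective-limit step, tracking the scaling factors $|P_n(\gamma)|^{1/2}$ carefully.

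\textbf{Step 1: the compatibility square.} First I would check $p_{n',n}\cdot r_{n'} = r_n\cdot q_{n',n}$ on the spanning coefficient functions. Fix $\gamma' \in \gt^*_{n'}$ with $\gamma = \gamma'|_{\gs_n}$ and $C^\infty$ vectors $u,v$. On the one hand, $r_{n'}$ multiplies by $|P_{n'}(\gamma')|^{1/2}$ and $p_{n',n}$ acts by (\ref{rescale3}), contributing the factor $|P_n(\gamma)/P_{n'}(\gamma')|^{1/2}$ together with the restriction-of-coefficients described in (\ref{restr})--(\ref{restr1}). On the other hand, $q_{n',n}$ is honest restriction $f\mapsto f|_{N_n}$ by (\ref{rescale4s}), and $r_n$ then multiplies by $|P_n(\gamma)|^{1/2}$. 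The two products agree precisely because the scaling factors telescope: $|P_{n'}(\gamma')|^{1/2}\cdot|P_n(\gamma)/P_{n'}(\gamma')|^{1/2} = |P_n(\gamma)|^{1/2}$. The content of this step is just bookkeeping on top of the orthogonality relation of Proposition \ref{sq-orthogrel-quo} and the restriction identity (\ref{restr}); no new estimate is needed beyond those already proved. Because both sides are continuous and the coefficient functions span a dense subspace (Theorem \ref{coef-sch} and Remark \ref{make-relative}), the identity extends to all of $\cC(N_{n'})$.

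\textbf{Step 2: passage to the limit.} Having the commuting squares, the maps $r_n$ induce a map $r=\varprojlim r_n$ on the projective limits $\cC(N)=\varprojlim\{\cC(N_n),q_{n',n}\}$ and $L^2(N)=\varprojlim\{L^2(N_n),p_{n',n}\}$ by the universal property of inverse limits. Continuity of $r$ follows from continuity of each $r_n$ together with the fact that the limit topology is the initial topology for the projection maps. The $N$-equivariance is inherited from the $N$-equivariance of the $r_n$, which in turn comes from the equivariance of the coefficient map $(u,v)\mapsto f_{\pi_{\gamma_n},u,v}$ under left and right translation.

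\textbf{Step 3: density and injectivity.} Density of the image is the main thing to pin down, and I expect it to be the genuine obstacle rather than the formal Step 1. At each finite stage, $r_n(\cC(N_n))$ is dense in $L^2(N_n)$ because the relative Schwartz space is dense in $L^2(N_n/S_n,\zeta_{\gamma_n})$ fiberwise (as noted after (\ref{defrelSchwartz})) and because every Schwartz function is a wave packet of coefficients (Remark \ref{make-relative}). The subtlety is that density at each finite level does not automatically give density in the projective limit: one must show that a coherent family in $\varprojlim L^2(N_n)$ can be approximated by the image of a coherent family in $\varprojlim\cC(N_n)$, which requires the approximations at different levels to be chosen compatibly with the transition maps $q_{n',n}$ and $p_{n',n}$. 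The clean way is to argue that $r$ is an isometry onto its image for the prehilbert structure that $r$ itself defines on $\cC(N)$ (each $r_n$ is a scaled isometry by (\ref{rescalecl1}), and the scalings are exactly what makes the squares commute), so that the completion of $\cC(N)$ in this pulled-back inner product maps isometrically into $L^2(N)$; density then reduces to the finite-stage density statements together with the fact that the $p_{n',n}$ are partial isometries, so approximation at level $n$ lifts compatibly. Injectivity is immediate once $r$ is an isometry for the induced prehilbert norm. This establishes that $r$ realizes $\cC(N)$ as a dense subspace with completion isometric to $L^2(N)$, as claimed.
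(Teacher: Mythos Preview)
Your proposal is correct and follows precisely the approach the paper indicates: the paper does not give an explicit proof of this proposition, but simply states that ``the arguments leading to \cite[Proposition 2.22]{W2010} can be dualized from direct limits to projective limits,'' and your three steps (compatibility square via telescoping Pfaffian factors, universal property of inverse limits, density via the finite-stage density plus the partial-isometry structure of the $p_{n',n}$) are exactly that dualization spelled out. If anything, you have supplied more detail than the paper does, and your identification of the density-in-the-projective-limit step as the genuinely delicate point is well placed.
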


\section{Fourier Inversion for the Limit Group}
\label{sec6}
\setcounter{equation}{0}
In this section we apply the material of Section \ref{sec5} to extend the
Fourier inversion portion of Theorem \ref{plancherel-general}
from the $N_n$ to the limit group $N = \varinjlim N_n$\,.
To set this up recall that
\begin{itemize}
\item $\gt^* = \varprojlim \gt^*_n$ consists of all
collections $\gamma = (\gamma_n)$ where each $\gamma_n \in \gt_n^*$ and
if $n' \geqq n$ then $\gamma_{n'}|_{\gs_n} = \gamma_n$\,.  
\item given $\gamma = (\gamma_n) \in \gt^*$ the limit representation
$\pi_\gamma = \varprojlim \pi_{\gamma_n}$ is constructed as in 
Section \ref{sec2},
\item The distribution character $\Theta_{\pi_{\gamma_n}}$ are given by
(\ref{def-dist-char}), and
\item $\cC(N) = \varprojlim \cC(N_n)$ consists of all collections
$f = (f_n)$ where each $f_n \in \cC(N_n)$ and if $n' \geqq n$ then
$f_{n'}|_{N_n} = f_n$\,.
\end{itemize}
Then the limit Fourier inversion formula is
\begin{theorem}\label{limit-inversion}
Suppose that $N = \varinjlim N_n$ where $\{N_n\}$ satisfies 
{\rm (\ref{newsetup})}.  Then the Plancherel measure for $N$ is
concentrated on $\gt^*$.   Let $f = (f_n) \in \cC(N)$ and $x \in N$.
Then $x \in N_n$ for some $n$ and
\begin{equation}\label{lim-inv-formula}
f(x) = c_n\int_{\gt_n^*} \Theta_{\pi_{\gamma_n}}(r_xf) 
	|\Pf_{\gn_n}({\gamma_n})|d\gamma_n
\end{equation}
where $c_n = 2^{d_1 + \dots + d_m} d_1! d_2! \dots d_m!$ as in {\rm (\ref{c-d}a)}
and $m$ is the number of factors $L_r$ in $N_n$. 
\end{theorem}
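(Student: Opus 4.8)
The plan is to reduce the infinite-dimensional statement to the finite-dimensional Fourier inversion formula of Theorem \ref{plancherel-general} applied to each $N_n$, and then verify that the formula is independent of the choice of $n$ so that it is well defined on the projective limit. First I would observe that since $f = (f_n) \in \cC(N) = \varprojlim \cC(N_n)$, each component $f_n$ lies in the genuine (finite-dimensional) Schwartz space $\cC(N_n)$, and the compatibility $f_{n'}|_{N_n} = f_n$ holds for $n' \geqq n$. Given $x \in N$, the direct-limit structure $N = \varinjlim N_n$ guarantees $x \in N_n$ for some $n$, and then $f(x) = f_n(x)$ by definition of the limit function. At this stage I would simply invoke Theorem \ref{plancherel-general}, specialized to the group $N_n$ with its stepwise square integrable decomposition $N_n = L_1 L_2 \cdots L_{m(n)}$ from (\ref{newsetup}), to write
\begin{equation*}
f_n(x) = c_n \int_{\gt_n^*} \Theta_{\pi_{\gamma_n}}(r_x f_n) \, |\Pf_{\gn_n}(\gamma_n)| \, d\gamma_n,
\end{equation*}
which is exactly (\ref{lim-inv-formula}) once one notes that $r_x f$ restricted to $N_n$ equals $r_x f_n$ (right translation commutes with restriction to the subgroup $N_n$, since $x \in N_n$).

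The substantive point, and the step I expect to be the main obstacle, is \emph{well-definedness}: the right-hand side of (\ref{lim-inv-formula}) is written in terms of a particular index $n$ with $x \in N_n$, but since $x \in N_{n'}$ for every $n' \geqq n$ as well, I must check that the larger integral over $\gt_{n'}^*$ against the character $\Theta_{\pi_{\gamma_{n'}}}$ yields the same value. The natural way to see this is to apply Theorem \ref{plancherel-general} to $N_{n'}$ and obtain $f(x) = f_{n'}(x) = c_{n'} \int_{\gt_{n'}^*} \Theta_{\pi_{\gamma_{n'}}}(r_x f_{n'}) \, |\Pf_{\gn_{n'}}(\gamma_{n'})| \, d\gamma_{n'}$, and then verify that the two expressions agree. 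Since both compute the common number $f(x)$, consistency is automatic; but to make the formula genuinely intrinsic I would want to exhibit the collapse of the $\gt_{n'}^*$ integral onto the $\gt_n^*$ integral directly. This is precisely where the restriction relation (\ref{restr}) and the rescaling in Proposition \ref{rescale} enter: fixing $\gamma_n = \gamma_{n'}|_{\gs_n}$ and integrating out the extra variables in $\gamma_{n'}$ should, after accounting for the Pfaffian factor $\Pf_{\gn_{n'}}(\gamma_{n'}) = \Pf_{\gn_n}(\gamma_n) \cdot \Pf_{n,n'}(\gamma_{n,n'})$ that splits along the semidirect decomposition $N_{n'} = N_n \ltimes N_{n,n'}$, reproduce the $N_n$ formula, with the contributions from $N_{n,n'}$ collapsing to evaluation at the identity because $f_{n'}$ depends on only the first $m(n)$ factors near $x$.

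Concretely, the key computation mirrors the orthogonality calculation in the proof of Proposition \ref{sq-orthogrel-quo}: the character $\Theta_{\pi_{\gamma_{n'}}}$ factors, via the tensor product construction (\ref{def-sq-reps}), as a product of the character of $\pi_{\gamma_n}$ with that of $\pi'_{\gamma_{n,n'}}$, and when evaluated on $r_x f_{n'}$ with $x \in N_n$ the second factor integrates against its Pfaffian density to a Dirac-type concentration at the trivial parameter on $N_{n,n'}$, recovering $f_n(x)$. Because the Plancherel formula for each finite-dimensional $N_n$ is already established and the Schwartz-space compatibility of Proposition \ref{compare} ensures all integrals converge absolutely (here the upgrade from $L^2$ to $L^1$ via Corollary \ref{l1-coef1} is what legitimizes pointwise evaluation and Fubini), no genuinely new analytic estimate is required. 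The entire argument is therefore a matter of assembling Theorem \ref{plancherel-general}, the restriction identities of Section \ref{sec3}, and the Schwartz-space limit of Section \ref{sec5}; the only care needed is bookkeeping of the Pfaffian normalizations $c_n$ and $|\Pf_{\gn_n}|$ across the index shift, which is exactly the finite-dimensional content already encoded in the multiplicativity $P_{n'}(\gamma_{n'}) = P_n(\gamma_n) \Pf_{n,n'}(\gamma_{n,n'})$.
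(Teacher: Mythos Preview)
Your proposal is correct and matches the paper's approach: the paper's entire proof is the single observation that applying Theorem \ref{plancherel-general} to $N_n$ gives $f(x) = f_n(x) = c_n\int_{\gt_n^*} \Theta_{\pi_{\gamma_n}}(r_xf)\,|\Pf_{\gn_n}(\gamma_n)|\,d\gamma_n$. Your additional discussion of well-definedness across different choices of $n$ is sound but unnecessary---as you yourself note, both sides compute the common value $f(x)$, so consistency is automatic and the paper does not pursue the direct collapse argument.
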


\begin{proof}  Apply Theorem \ref{plancherel-general} to $N_n$.  That gives
$f(x) = f_n(x) =
c_n\int_{\gt_n^*} \Theta_{\pi_{\gamma_n}}(r_xf) |\Pf_{\gn_n}({\gamma_n})|\,
d\gamma_n$\,.
\end{proof}

\begin{remark}{\rm
A Plancherel Formula of the sort $||f||_{L^2}^2 = \int ||\pi(f)||^2_{HS}\, d\pi$
usually is somewhat easier than a Fourier inversion formula.  This 
in part is 
because it usually is easier to prove that operators $\pi(f)$ are
Hilbert-Schmidt than to prove that (for appropriate functions $f$)
they are of trace class.  Thus one might expect that a formula 
$||f||^2_{L^2(N)} = \lim c_n'\int_{\gt_n^*}
||\pi_{\gamma_n}(f|_{N_n})||^2_{HS} |\Pf_{\gn_n}({\gamma_n})|\,d\gamma_n$
would be easier to prove than (\ref{lim-inv-formula}).  But it is not
clear how to relate the Hilbert--Schmidt norms to the limit process, because
we have have not yet found an appropriate form of the Frobenius--Schur 
orthogonality relations.  Thus the ``less delicate'' Plancherel Formula remains
problematical.  } \hfill $\diamondsuit$
\end{remark}

\section{Nilradicals of Parabolics in Finite Dimensional Groups}
\label{sec7}
\setcounter{equation}{0}
In Section \ref{sec8} we will specialize our results to nilradicals 
of minimal parabolic subgroups
of finitary real reductive Lie groups such as the infinite special and general
linear groups and the infinite real, complex and quaternionic unitary groups.
In order to do that, in this section we review the relevant restricted
root structure that gives the finite dimensional case, reversing some
of the enumerations used in \cite{W2013} to be appropriate for our direct limit
systems.
\medskip

Let $G$ be a finite dimensional connected 
real reductive Lie group.  We recall some structural results
on its minimal parabolic subgroups, some standard and some from \cite{W2013}.
\medskip

Fix an Iwasawa decomposition $G = KAN$.  Write $\gk$ for the Lie 
algebra of $K$, $\ga$ for the Lie algebra of $A$, and $\gn$ for the
Lie algebra of $N$.  Complete $\ga$ to a Cartan subalgebra $\gh$ of $\gg$.
Then $\gh = \gt + \ga$ with $\gt = \gh \cap \gk$.  Now we have root systems
\begin{itemize}
\item $\Delta(\gg_\C,\gh_\C)$: roots of $\gg_\C$ relative to $\gh_\C$ 
(ordinary roots), and

\item $\Delta(\gg,\ga)$: roots of $\gg$ relative to $\ga$ (restricted roots). 

\item $\Delta_0(\gg,\ga) = \{\gamma \in \Delta(\gg,\ga) \mid 
	2\gamma \notin \Delta(\gg,\ga)\}$ (nonmultipliable restricted roots).
\end{itemize}
Sometimes we will identify a restricted root
$\gamma = \alpha|_\ga$, $\alpha \in \Delta(\gg_\C,\gh_\C)$ and 
$\alpha |_\ga \ne 0$, with the set 
\begin{equation}\label{resrootset}
[\gamma] := 
\{\alpha' \in \Delta(\gg_\C,\gh_\C) \mid \alpha'|_\ga = \alpha|_\ga\}
\end{equation}
of all roots that restrict to it.  Further, 
$\Delta(\gg,\ga)$ and $\Delta_0(\gg,\ga)$ are root 
systems in the usual sense.  Any positive system 
$\Delta^+(\gg_\C,\gh_\C) \subset \Delta(\gg_\C,\gh_\C)$ defines positive 
systems
\begin{itemize}
\item $\Delta^+(\gg,\ga) = \{\alpha|_\ga \mid \alpha \in 
\Delta^+(\gg_\C,\gh_\C) 
\text{ and } \alpha|_\ga \ne 0\}$ and $\Delta_0^+(\gg,\ga) =
\Delta_0(\gg,\ga) \cap \Delta^+(\gg,\ga)$.
\end{itemize}
\noindent We can (and do) choose $\Delta^+(\gg,\gh)$ so that 
\begin{itemize}
\item$\gn$ is the sum of the positive restricted root spaces and
\item if $\alpha \in \Delta(\gg_\C,\gh_\C)$ and $\alpha|_\ga \in
\Delta^+(\gg,\ga)$ then $\alpha \in \Delta^+(\gg_\C,\gh_\C)$.
\end{itemize}
\medskip

Recall that two roots are {\em strongly orthogonal} if their sum and their
difference are not roots.  Then they are orthogonal.  We define
\begin{equation}\label{cascade}
\begin{aligned}
&\beta'_1 \in \Delta^+(\gg,\ga) \text{ is a maximal positive restricted root
and }\\
& \beta'_{r+1} \in \Delta^+(\gg,\ga) \text{ is a maximum among the roots of }
\Delta^+(\gg,\ga) \text{ orthogonal to all } \beta'_i \text{ with } i \leqq r
\end{aligned}
\end{equation}
Then the $\beta'_r$ are mutually strongly orthogonal.  
Note that each $\beta'_r \in \Delta_0^+(\gg,\ga)$.
This is the Kostant cascade coming down from the maximal root.  Denote
\begin{equation}\label{wrong-numbering}
\{\beta'_1, \dots , \beta'_m\}: \text{ the set of strongly orthogonal roots
constructed in (\ref{cascade}).}
\end{equation}
The enumeration (\ref{wrong-numbering}) is not appropriate for the direct
limit process, but we need it for some of the lemmas below.  For direct 
limit considerations we will use the reversed ordering
\begin{equation}\label{right-numbering}
\beta_r = \beta'_{m-r+1} \text{, so the ordered sets }
\{\beta_1, \dots , \beta_m\} = \{\beta'_m, \dots , \beta'_1\}.
\end{equation}

For $1\leqq r \leqq m$ define 
\begin{equation}\label{layers}
\begin{aligned}
&\Delta^+_m = \{\alpha \in \Delta^+(\gg,\ga) \mid \beta_m - \alpha \in \Delta^+(\gg,\ga)\} 
\text{ and }\\
&\Delta^+_{m-r-1} = \{\alpha \in \Delta^+(\gg,\ga) \setminus 
	(\Delta^+_m \cup \dots \cup \Delta^+_{m-r})
	\mid \beta_{m-r-1} - \alpha \in \Delta^+(\gg,\ga)\}.
\end{aligned}
\end{equation} 

\begin{lemma} \label{fill-out} {\rm \cite[Lemma 6.3]{W2013}}
If $\alpha \in \Delta^+(\gg,\ga)$ then either 
$\alpha \in \{\beta_1, \dots , \beta_m\}$
or $\alpha$ belongs to exactly one of the sets $\Delta^+_r$\,.
In particular the Lie algebra $\gn$ of $N$ is the
vector space direct sum of its subspaces
\begin{equation}\label{def-m}
\gl_r = \gg_{\beta_r} + {\sum}_{\Delta^+_r}\, \gg_\alpha 
\text{ for } 1\leqq r\leqq m
\end{equation}
\end{lemma}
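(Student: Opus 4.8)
The plan is to recognize (\ref{cascade})--(\ref{layers}) as the standard Kostant cascade run from the top root $\beta_m = \beta'_1$ downward, and to prove the two assertions — that the $\beta_r$ together with the layers $\Delta^+_r$ partition $\Delta^+(\gg,\ga)$, and that the induced grouping of root spaces gives $\gn = \bigoplus_r \gl_r$ — by descending induction on the layer index, peeling off one strongly orthogonal root at a time. The Lie algebra direct sum is then immediate: since $\gn = \bigoplus_{\alpha \in \Delta^+(\gg,\ga)} \gg_\alpha$, once the index set $\Delta^+(\gg,\ga)$ is shown to be the disjoint union $\{\beta_1,\dots,\beta_m\} \sqcup \Delta^+_1 \sqcup \cdots \sqcup \Delta^+_m$, grouping the summands according to (\ref{def-m}) yields the claim.

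The core input is the classical behaviour of a highest root. For a (possibly non-reduced) root system $\Phi$ with positive system $\Phi^+$ and highest root $\theta$, every $\alpha \in \Phi^+$ satisfies $(\alpha,\theta) \geqq 0$; moreover $(\alpha,\theta) > 0$ with $\alpha \neq \theta$ holds exactly when $\theta - \alpha \in \Phi^+$, in which case $\alpha$ and $\theta - \alpha$ pair off as positive roots summing to $\theta$, while $(\alpha,\theta) = 0$ cuts out the root subsystem $\Phi_\theta^\perp$ of roots orthogonal to $\theta$. Applying this with $\Phi = \Delta(\gg,\ga)$ and $\theta = \beta_m$ identifies $\Delta^+_m = \{\alpha \mid \beta_m - \alpha \in \Delta^+(\gg,\ga)\}$ with the set of positive roots not orthogonal to $\beta_m$ (other than $\beta_m$ itself), and exhibits the positive system as the disjoint union $\{\beta_m\} \sqcup \Delta^+_m \sqcup (\Phi^{(1)})^+$, where $\Phi^{(1)}$ denotes the subsystem of roots orthogonal to $\beta_m$.

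For the induction I would set $\Phi^{(r)}$ to be the subsystem of roots orthogonal to $\beta_m, \beta_{m-1}, \dots, \beta_{m-r+1}$, with $\Phi^{(0)} = \Delta(\gg,\ga)$. By the construction (\ref{cascade}), $\beta_{m-r} = \beta'_{r+1}$ is maximal among the roots orthogonal to $\beta'_1,\dots,\beta'_r$, that is, the highest root of $\Phi^{(r)}$; the strong orthogonality of the $\beta_i$ guarantees that $\Phi^{(r)}$ is a genuine root subsystem containing $\beta_{m-r}$. The inductive hypothesis is that, after removing $\{\beta_m,\dots,\beta_{m-r+1}\}$ and $\Delta^+_m \cup \cdots \cup \Delta^+_{m-r+1}$ from $\Delta^+(\gg,\ga)$, the positive roots that remain are exactly $(\Phi^{(r)})^+$. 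Feeding the highest-root dichotomy of the previous paragraph into $\Phi^{(r)}$ with $\theta = \beta_{m-r}$ then splits $(\Phi^{(r)})^+$ as $\{\beta_{m-r}\} \sqcup \Delta^+_{m-r} \sqcup (\Phi^{(r+1)})^+$, which advances the induction and simultaneously shows that the layer $\Delta^+_{m-r}$ of (\ref{layers}) is the intrinsic first layer of $\Phi^{(r)}$.

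The step I expect to be delicate is verifying that the set-difference bookkeeping in (\ref{layers}) agrees with this intrinsic recursion — concretely, that a root $\alpha \in (\Phi^{(r)})^+$ can never have been captured by an earlier layer, equivalently that $\beta_j - \alpha \notin \Delta^+(\gg,\ga)$ whenever $\alpha \perp \beta_j$. If $\beta_j - \alpha$ were a positive root with $\alpha \perp \beta_j$, then $(\beta_j - \alpha, \beta_j - \alpha) = (\beta_j,\beta_j) + (\alpha,\alpha) > (\beta_j,\beta_j)$ would force $\beta_j - \alpha$ to be strictly longer than $\beta_j$, contradicting the fact that the highest root $\beta_j$ of the relevant $\Phi^{(r')}$ is a longest root there; one must check this length comparison remains legitimate across the recursion and, in particular, handle the non-reduced (type $BC$) restricted root systems arising for $SU(p,q)$, $Sp$ and $SO^*$, where multipliable and nonmultipliable roots of different lengths coexist (recall each $\beta'_r \in \Delta_0^+(\gg,\ga)$ is nonmultipliable and long). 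This same length estimate, together with strong orthogonality, also shows each $\beta_r$ lies in no $\Delta^+_s$ — the first layer never contains its own defining highest root, and orthogonality keeps $\beta_r$ out of the earlier layers — which secures the ``exactly one'' disjointness and completes the proof.
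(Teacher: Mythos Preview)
The paper does not prove this lemma here; it is quoted without argument from \cite[Lemma 6.3]{W2013}. Your cascade induction is correct and is exactly the standard proof: peel off the current highest root $\beta_j$ together with the positive roots non\-orthogonal to it, observe that what remains is the positive system of the orthogonal subsystem $\Phi^{(m-j)}$, and iterate.

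One simplification for the step you flag as delicate. Rather than the length comparison (which, as you note, requires separate care in type $BC$), use the $\alpha$--string through $\beta_j$: if $(\alpha,\beta_j)=0$ then $p=q$ in that string, so $\beta_j-\alpha$ is a root if and only if $\beta_j+\alpha$ is. Now both $\beta_j$ and $\alpha$ are orthogonal to $\beta_m,\dots,\beta_{j+1}$, hence so is $\beta_j+\alpha$; if it were a root it would lie in $\Phi^{(m-j)}$ and strictly dominate $\beta_j$ there, contradicting the maximality in (\ref{cascade}). This disposes of the bookkeeping uniformly, without invoking root lengths or splitting into reduced and non--reduced cases. The same string argument (with strong orthogonality replacing mere orthogonality) shows immediately that no $\beta_r$ lies in any $\Delta^+_s$.
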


\begin{lemma}\label{layers2}{\rm \cite[Lemma 6.4]{W2013}}
The set $\Delta^+_r\cup \{\beta_r\}  
= \{\alpha \in \Delta^+ \mid \alpha \perp \beta_i \text{ for } i > r
\text{ and } \langle \alpha, \beta_r\rangle > 0\}.$
In particular, $[\gl_r,\gl_s] \subset \gl_t$ where $t = \max\{r,s\}$.
Thus $\gn$ has an increasing foliation based on the ideals
\begin{equation}\label{def-filtration}
\gl_{r,m} = \gl_{r+1} + \dots + \gl_m \text{ for } 0 \leqq r < m
\end{equation}
with a corresponding group level decomposition by normal subgroups 
$L_{r,m}$ where
\begin{equation}\label{def-filtration-group}
N = L_{0,m} = L_1L_2\dots L_m \text{ and } L_{r,m} = L_{r+1} \ltimes N_{r+1,m}
	\text{ for } 0 \leqq r < m.
\end{equation}
\end{lemma}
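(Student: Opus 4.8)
The plan is to reduce everything to the single root-system identity
\[
\Delta^+_r \cup \{\beta_r\} \;=\; C_r, \qquad C_r := \{\alpha \in \Delta^+(\gg,\ga) \mid \alpha \perp \beta_i \text{ for } i>r,\ \langle \alpha,\beta_r\rangle > 0\},
\]
because the two ``In particular'' assertions follow formally once it is known. Indeed, by (\ref{def-m}) and the identity each $\gl_r$ is the sum of the root spaces $\gg_\alpha$ with $\alpha \in C_r$, so the bracket relation $[\gl_r,\gl_s]\subset\gl_{\max(r,s)}$ amounts to checking that whenever $\alpha\in C_r$, $\alpha'\in C_s$ (say $s\le r$) and $\alpha+\alpha'\in\Delta(\gg,\ga)$, then $\alpha+\alpha'\in C_r$; and the ideal property of $\gl_{r,m}=\gl_{r+1}+\cdots+\gl_m$ together with the semidirect decompositions (\ref{def-filtration-group}) are immediate consequences of $[\gl_r,\gl_s]\subset\gl_{\max(r,s)}$ and the fact that $\exp$ is a diffeomorphism. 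So the real work is the identity and the bracket computation.

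First I would organize the cascade facts through the nested orthogonal subsystems $\Delta^{[r]} := \{\alpha\in\Delta(\gg,\ga)\mid \alpha\perp\beta_i \text{ for } i>r\}$. By the reversed construction (\ref{cascade})--(\ref{right-numbering}), $\beta_r$ is the highest root of $\Delta^{[r]}$ — more precisely of the irreducible component $\Delta^{[r]}_0$ containing it — and $\Delta^{[0]}=\emptyset$, since the cascade is continued exactly until the orthogonal complement carries no roots. I would then prove the \emph{partition statement}: the sets $C_1,\dots,C_m$ are disjoint and exhaust $\Delta^+(\gg,\ga)$. For $\alpha\in\Delta^+(\gg,\ga)$ let $r$ be the largest index with $\langle\alpha,\beta_r\rangle\ne 0$, which exists since $\Delta^{[0]}=\emptyset$; then $\alpha\in\Delta^{[r]}$, and since $\langle\alpha,\beta_r\rangle\ne 0$ forces $\alpha$ into $\Delta^{[r]}_0$, dominance of the highest root $\beta_r$ (i.e. $\langle\beta_r,\mu\rangle\ge 0$ for every positive root $\mu$ of its component) upgrades this to $\langle\alpha,\beta_r\rangle>0$, so $\alpha\in C_r$ and in no other $C_s$.

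The identity itself I would obtain by downward induction on $r$. The case $r=m$ is the classical highest-root lemma: $\beta_m$ is the maximal root, the orthogonality condition is vacuous, and the $\beta_m$-string argument gives, for $\alpha\ne\beta_m$, that $\langle\alpha,\beta_m\rangle>0 \Leftrightarrow \beta_m-\alpha\in\Delta^+(\gg,\ga)$, which is exactly $\Delta^+_m=C_m\setminus\{\beta_m\}$. For the inductive step, the induction hypothesis and the partition rewrite the ``not already used'' constraint in (\ref{layers}) as ``$\alpha\in C_1\cup\cdots\cup C_r$ or $\alpha\in\{\beta_{r+1},\dots,\beta_m\}$''. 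Among these I would show $\beta_r-\alpha\in\Delta^+(\gg,\ga)$ holds precisely for $\alpha\in C_r\setminus\{\beta_r\}$: for such $\alpha$ apply the highest-root lemma inside $\Delta^{[r]}_0$; the strongly orthogonal $\beta_j$ ($j>r$) are excluded because $\beta_r-\beta_j\notin\Delta(\gg,\ga)$; and for $\alpha\in C_s$ with $s<r$ one has $\alpha\perp\beta_r$, so the symmetric $\beta_r$-string shows $\beta_r-\alpha\in\Delta(\gg,\ga)$ would force $\alpha+\beta_r\in\Delta(\gg,\ga)$, which is impossible (if $\alpha\in\Delta^{[r]}_0$ this exceeds the highest root, and if $\alpha$ lies in another component no root can meet both).

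Finally, for the bracket relation the only delicate point is positivity of $\langle\alpha+\alpha',\beta_r\rangle$ when $\alpha\in C_r$, $\alpha'\in C_s$, $s\le r$, and $\alpha+\alpha'$ is a root: if $s=r$ both pairings with $\beta_r$ are positive, while if $s<r$ then $\alpha'\perp\beta_r$, so in either case $\langle\alpha+\alpha',\beta_r\rangle=\langle\alpha,\beta_r\rangle>0$; and $\alpha+\alpha'\perp\beta_i$ for $i>r$ is automatic, so $\alpha+\alpha'\in C_r=C_{\max(r,s)}$. I expect the main obstacle to be the bookkeeping forced by reducibility of the complements $\Delta^{[r]}$ and by the non-reduced (type $BC$) restricted root systems that arise for the quaternionic and $SO^*$ series: one must consistently argue inside the single component $\Delta^{[r]}_0$ containing $\beta_r$ and exploit that distinct components are orthogonal, so that any root meeting $\beta_r$ nontrivially — or any root that is a sum of two roots one of which meets $\beta_r$ — necessarily lies in $\Delta^{[r]}_0$.
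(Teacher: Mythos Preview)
Your argument is correct. The paper itself does not prove this lemma: it is quoted verbatim from \cite[Lemma 6.4]{W2013} (as the citation in the statement indicates), so there is no ``paper's own proof'' to compare against here. What you have written is the standard cascade argument and is almost certainly what appears in the cited reference: organize the positive restricted roots by the nested orthogonal subsystems $\Delta^{[r]}$, identify $\beta_r$ as the highest root of its irreducible component $\Delta^{[r]}_0$, use dominance and the $\beta_r$--string to obtain both the partition $\Delta^+=\bigsqcup C_r$ and the equivalence $\beta_r-\alpha\in\Delta^+ \Leftrightarrow \alpha\in C_r\setminus\{\beta_r\}$, then read off $[\gl_r,\gl_s]\subset\gl_{\max(r,s)}$ from additivity of the inner product. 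Your treatment of the two delicate points --- reducibility of $\Delta^{[r]}$ (handled by confining everything to $\Delta^{[r]}_0$ and using that distinct components are orthogonal, so no cross-sums can be roots) and the non-reduced $BC$ case (where the string argument goes through unchanged because $\beta_r$ is still the unique highest root of its component and $\alpha+\beta_r$ can never be a root) --- is sound. Note incidentally that your partition statement simultaneously yields Lemma~\ref{fill-out}, which the paper also only cites.
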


The structure of $\Delta^+_r$, and later of $\gl_r$, is exhibited by a 
particular Weyl group element of $\Delta(\gg,\ga)$ and the negative of
that Weyl group element.  Denote
\begin{equation}\label{beta-reflect}
s_{\beta_r} \text{ is the Weyl group reflection in } \beta_r
\text{ and } \sigma_r: \Delta(\gg,\ga) \to \Delta(\gg,\ga) \text{ by }
\sigma_r(\alpha) = -s_{\beta_r}(\alpha).
\end{equation}
Here $\sigma_r(\beta_s) = -\beta_s$ for $s \ne r$, $+\beta_s$ if $s = r$.
If $\alpha \in \Delta^+_r$ we still have $\sigma_r(\alpha) \perp \beta_i$
for $i > r$ and $\langle \sigma_r(\alpha), \beta_r\rangle > 0$.  If
$\sigma_r(\alpha)$ is negative then $\beta_r - \sigma_r(\alpha) > \beta_r$
contradicting the maximality property of $\beta_{m-r+1}$.  Thus, using 
Lemma \ref{layers2}, $\sigma_r(\Delta^+_r) = \Delta^+_r$.
This divides each $\Delta^+_r$ into pairs:

\begin{lemma} \label{layers-nilpotent}{\rm \cite[Lemma 6.8]{W2013}}
If $\alpha \in \Delta^+_r$ then $\alpha + \sigma_r(\alpha) = \beta_r$.
{\rm (}Of course it is possible that 
$\alpha = \sigma_r(\alpha) = \tfrac{1}{2}\beta_r$ when 
$\tfrac{1}{2}\beta_r$ is a root.{\rm ).}
If $\alpha, \alpha' \in \Delta^+_r$ and $\alpha + \alpha' \in \Delta(\gg,\ga)$
then $\alpha + \alpha' = \beta_r$\,.
\end{lemma}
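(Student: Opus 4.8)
The plan is to reduce both assertions to a single computation of the Cartan integer $\frac{2\langle\alpha,\beta_r\rangle}{\langle\beta_r,\beta_r\rangle}$ for $\alpha \in \Delta^+_r$. From the definition $\sigma_r(\alpha) = -s_{\beta_r}(\alpha)$ together with the reflection formula $s_{\beta_r}(\alpha) = \alpha - \frac{2\langle\alpha,\beta_r\rangle}{\langle\beta_r,\beta_r\rangle}\beta_r$, one gets at once
\[
\alpha + \sigma_r(\alpha) = \tfrac{2\langle\alpha,\beta_r\rangle}{\langle\beta_r,\beta_r\rangle}\,\beta_r .
\]
Thus the first assertion $\alpha + \sigma_r(\alpha) = \beta_r$ is exactly the claim that this Cartan integer equals $1$, and the parenthetical case $\alpha = \tfrac12\beta_r$ is automatically covered, since there $\frac{2\langle\alpha,\beta_r\rangle}{\langle\beta_r,\beta_r\rangle} = 1$ as well.

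For the lower bound I would argue that the integer is at least $1$: by Lemma \ref{layers2} every $\alpha \in \Delta^+_r$ satisfies $\langle\alpha,\beta_r\rangle > 0$, and a positive Cartan integer is $\geqq 1$. The upper bound is the heart of the matter and the step I expect to be the main obstacle. Suppose the integer were $p \geqq 2$. Then the displayed identity gives $\sigma_r(\alpha) = p\beta_r - \alpha$, whence
\[
\sigma_r(\alpha) - \beta_r = (p-2)\beta_r + (\beta_r - \alpha).
\]
Now $\beta_r - \alpha \in \Delta^+(\gg,\ga)$ by the very definition {\rm (\ref{layers})} of $\Delta^+_r$, so the right-hand side is a nonzero nonnegative combination of positive roots; hence $\sigma_r(\alpha) > \beta_r$. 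But the discussion preceding the lemma shows $\sigma_r(\Delta^+_r) = \Delta^+_r$, so $\sigma_r(\alpha)$ is a positive restricted root orthogonal to every $\beta_i$ with $i > r$, and $\sigma_r(\alpha) > \beta_r$ then contradicts the maximality {\rm (\ref{cascade})} of $\beta_r$ among exactly such roots. Therefore $p = 1$, proving the first assertion. The delicate points to verify are the integrality of the Cartan integer in the possibly non-reduced restricted root system, and that the comparison $\sigma_r(\alpha) > \beta_r$ is legitimate, i.e. that $(p-2)\beta_r + (\beta_r-\alpha)$ genuinely lies in the positive-root cone; both are routine once the setup is unwound.

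For the second assertion I would exploit additivity of the Cartan integer. If $\alpha, \alpha' \in \Delta^+_r$ and $\alpha + \alpha' \in \Delta(\gg,\ga)$, then $\alpha + \alpha' \perp \beta_i$ for $i > r$ and $\langle \alpha + \alpha', \beta_r\rangle > 0$, so by Lemma \ref{layers2} it lies in $\Delta^+_r \cup \{\beta_r\}$. By the first assertion each of $\alpha,\alpha'$ has Cartan integer $1$ against $\beta_r$, so $\frac{2\langle\alpha+\alpha',\beta_r\rangle}{\langle\beta_r,\beta_r\rangle} = 2$. Since every element of $\Delta^+_r$ has Cartan integer $1$ while $\beta_r$ has Cartan integer $2$, this forces $\alpha + \alpha' = \beta_r$, completing the proof.
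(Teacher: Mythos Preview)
The paper does not supply a proof of this lemma; it simply cites \cite[Lemma~6.8]{W2013}, so there is nothing in the present paper to compare your argument against directly. What the paper does contribute is the paragraph preceding the lemma, which establishes $\sigma_r(\Delta^+_r) = \Delta^+_r$; your proof makes good use of that.

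Your argument is correct. The computation $\alpha + \sigma_r(\alpha) = \tfrac{2\langle\alpha,\beta_r\rangle}{\langle\beta_r,\beta_r\rangle}\beta_r$ is immediate, and your bounding of the Cartan integer is sound. For the lower bound, positivity of $\langle\alpha,\beta_r\rangle$ is given by Lemma~\ref{layers2}, and integrality holds because restricted root systems are root systems in the Bourbaki sense (possibly non-reduced), where Cartan integers are always integers. For the upper bound, your contradiction is clean: if $p\geqq 2$ then $\sigma_r(\alpha)-\beta_r = (p-2)\beta_r + (\beta_r-\alpha)$ is a nonzero nonnegative integer combination of positive roots (note $\alpha\ne\beta_r$ since $\beta_r\notin\Delta^+_r$), so $\sigma_r(\alpha)>\beta_r$ in the standard partial order. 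Since $\sigma_r(\alpha)$ is a root, is positive (being $>\beta_r>0$), and is orthogonal to each $\beta_i$ with $i>r$, this contradicts the maximality in (\ref{cascade}). You do not actually need to invoke $\sigma_r(\Delta^+_r)=\Delta^+_r$ here: positivity and the orthogonality follow directly from your displayed formula and the fact that $s_{\beta_r}$ fixes each $\beta_i$ with $i\ne r$.

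Your treatment of the second assertion via additivity of the Cartan integer is efficient and correct: once you know every element of $\Delta^+_r$ has Cartan integer $1$ against $\beta_r$, the sum $\alpha+\alpha'$ has Cartan integer $2$, ruling out membership in $\Delta^+_r$ and forcing $\alpha+\alpha'=\beta_r$.
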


It comes out of Lemmas \ref{fill-out} and \ref{layers2} that the 
decompositions of (\ref{layers}), (\ref{def-m}) and
(\ref{def-filtration}) satisfy (\ref{newsetup}), so
Theorem \ref{plancherel-general}
applies to nilradicals of minimal parabolic subgroups.  In other words,
as in Theorem \ref{plancherel-general},
\begin{theorem}\label{iwasawa-layers}{\rm \cite[Theorem 6.16]{W2013}}
Let $G$ be a real reductive Lie group, $G = KAN$ an Iwasawa
decomposition, $\gl_r$ and $\gn_r$ the subalgebras of $\gn$ defined in 
{\rm (\ref{def-m})} and {\rm (\ref{def-filtration})},
and $L_r$ and $N_r$ the corresponding analytic subgroups of $N$.  
Then the $L_r$ and $N_r$ satisfy {\rm (\ref{newsetup})}.  In particular,
Plancherel measure for $N$ is
concentrated on $\{\pi_\lambda \mid \lambda \in \gt^*\}$.
If $\lambda \in \gt^*$, and if $u$ and $v$ belong to the
representation space $\cH_{\pi_\lambda}$ of $\pi_\lambda$,  then
the coefficient $f_{u,v}(x) = \langle u, \pi_\lambda(x)v\rangle$
satisfies
\begin{equation}
||f_{u,v}||^2_{L^2(N / S)} = \frac{||u||^2||v||^2}{|\Pf(\lambda)|}\,.
\end{equation}
The distribution character $\Theta_{\pi_\lambda}$ of $\pi_{\lambda}$ satisfies
\begin{equation}
\Theta_{\pi_\lambda}(f) = c^{-1}|\Pf(\lambda)|^{-1}\int_{\cO(\lambda)}
        \widehat{f_1}(\xi)d\nu_\lambda(\xi) \text{ for } f \in \cC(N)
\end{equation}
where $\cC(N)$ is the Schwartz space, $f_1$ is the lift
$f_1(\xi) = f(\exp(\xi))$, $\widehat{f_1}$ is its classical Fourier transform,
$\cO(\lambda)$ is the coadjoint orbit $\Ad^*(N)\lambda = \gv^* + \lambda$,
$c = 2^{d_1 + \dots + d_m} d_1! d_2! \dots d_m!$ as in {\rm (\ref{c-d}a)},
and $d\nu_\lambda$ is the translate of normalized Lebesgue measure from
$\gv^*$ to $\Ad^*(N)\lambda$.  The Fourier inversion formula on $N$ is
\begin{equation}
f(x) = c\int_{\gt^*} \Theta_{\pi_\lambda}(r_xf) |\Pf(\lambda)|d\lambda
        \text{ for } f \in \cC(N).
\end{equation}
\end{theorem}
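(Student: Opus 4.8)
The plan is to notice that the two analytic conclusions of the statement---the $L^2$ orthogonality relation for coefficients, and the character and Fourier inversion formulae---are literally Theorem \ref{plancherel-general} applied to the group $N$. So the entire burden is to check that the root-space decomposition $\gn = \gl_1 \oplus \cdots \oplus \gl_m$ of Lemma \ref{fill-out}, together with the filtration of Lemma \ref{layers2}, satisfies the structural hypotheses (\ref{newsetup}). Once that is established, each displayed formula is read off from the present data, with $\Pf$, $\cO(\lambda)$, and $c = 2^{d_1+\cdots+d_m}d_1!\cdots d_m!$ supplied by (\ref{c-d}a).

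First I would record that each $\gl_r = \gg_{\beta_r} + \sum_{\Delta^+_r}\gg_\alpha$ is a two-step nilpotent subalgebra. By Lemma \ref{layers2} we have $[\gl_r,\gl_r]\subset\gl_r$, and by Lemma \ref{layers-nilpotent} any bracket $[\gg_\alpha,\gg_{\alpha'}]$ with $\alpha,\alpha'\in\Delta^+_r$ lies in $\gg_{\alpha+\alpha'}$, which is nonzero only when $\alpha+\alpha'=\beta_r$; hence $[\gl_r,\gl_r]\subset\gg_{\beta_r}$. I would then set $\gz_r=\gg_{\beta_r}$ and $\gv_r=\sum_{\Delta^+_r}\gg_\alpha$, giving the splitting $\gl_r=\gz_r+\gv_r$ of (\ref{newsetup}d). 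That $\gg_{\beta_r}$ is actually the center follows because $2\beta_r\notin\Delta$ (each $\beta_r\in\Delta_0^+$) and because $\beta_r+\alpha$ cannot be a root for $\alpha\in\Delta^+_r$: such a root would be orthogonal to every $\beta_i$ with $i>r$ and strictly exceed $\beta_r$, contradicting the maximality built into the cascade (\ref{cascade}).

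The heart of the argument, and the step I expect to be the main obstacle, is the square-integrability clause (\ref{newsetup}b). Here I would invoke the Moore--Wolf criterion \cite{MW1973}: $L_r$ has square integrable representations exactly when the form $b_{\lambda_r}(x,y)=\lambda_r([x,y])$ is nondegenerate on $\gl_r/\gz_r$ for generic $\lambda_r\in\gz_r^*$. Lemma \ref{layers-nilpotent} furnishes precisely this pairing, since $\alpha+\sigma_r(\alpha)=\beta_r$ shows that the bracket pairs $\gg_\alpha$ with $\gg_{\sigma_r(\alpha)}$ nondegenerately into $\gg_{\beta_r}=\gz_r$; choosing $\lambda_r$ nonzero on $\gg_{\beta_r}$ then turns $b_{\lambda_r}$ into a symplectic form, so $\Pf_r(\lambda_r)\ne 0$. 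The delicate point is the exceptional case flagged in Lemma \ref{layers-nilpotent}, where $\tfrac12\beta_r$ is a root fixed by $\sigma_r$; there $\gg_{\beta_r/2}$ pairs with itself, and I would verify that the induced alternating form $[\,\cdot\,,\,\cdot\,]\colon\gg_{\beta_r/2}\times\gg_{\beta_r/2}\to\gg_{\beta_r}$ is still nondegenerate, so that $L_r$ is of Heisenberg type and remains square integrable.

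Finally the remaining parts of (\ref{newsetup}) are formal consequences of the bracket relation $[\gl_r,\gl_s]\subset\gl_{\max\{r,s\}}$ of Lemma \ref{layers2}. This shows that the $\gl_{r,m}$ of (\ref{def-filtration}) are ideals, yielding the normal subgroups and the semidirect decompositions $N=L_{0,m}=L_1\cdots L_m$ of (\ref{def-filtration-group}), as required by (\ref{newsetup}c). For (\ref{newsetup}d) I would sort the bracket targets into center versus complement: for $r<s$, any $\alpha\in\Delta^+_r\cup\{\beta_r\}$ is orthogonal to $\beta_s$ by Lemma \ref{layers2}, which forces $\alpha+\beta_s\notin\Delta$ and hence $[\gl_r,\gz_s]=0$, while the same orthogonality-and-positivity bookkeeping places $[\gl_r,\gl_s]$ in $\gv$. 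With (\ref{newsetup}) fully verified, Theorem \ref{plancherel-general} applies verbatim to $N$, completing the proof.
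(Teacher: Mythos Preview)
Your proposal is correct and follows exactly the approach the paper takes: the paper states just before the theorem that ``it comes out of Lemmas \ref{fill-out} and \ref{layers2}'' that the decomposition satisfies (\ref{newsetup}), whence Theorem \ref{plancherel-general} applies, and cites \cite[Theorem 6.16]{W2013} for the details. Your write-up fleshes out precisely those verifications---the two-step structure of each $\gl_r$ with center $\gg_{\beta_r}$, the Moore--Wolf nondegeneracy via the $\sigma_r$-pairing of Lemma \ref{layers-nilpotent}, and the bracket bookkeeping from Lemma \ref{layers2}---so there is nothing to add.
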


\section{Nilradicals of Parabolics in Infinite Dimensional Groups}
\label{sec8}
\setcounter{equation}{0}
We now look at the classical real forms of the three classical simple locally 
finite countable--dimensional Lie
algebras $\gg_\C = \varinjlim \gg_{n,\C}$, and their real forms
$\gg_\R$.  The Lie algebras $\gg_\C$ are the classical direct limits,
$\gsl(\infty,\C) = \varinjlim \gsl(n;\C)$,
$\gso(\infty,\C) = \varinjlim \gso(2n;\C) = \varinjlim \gso(2n+1;\C)$, and
$\gsp(\infty,\C) = \varinjlim \gsp(n;\C)$,
where the direct systems are
given by the inclusions of the form
$A \mapsto (\begin{smallmatrix} A & 0 \\ 0 & 0 \end{smallmatrix} )$
or $A \mapsto \left (\begin{smallmatrix} 0 & 0 & 0 \\ 0 & A & 0 \\ 0 & 0 & 0 
\end{smallmatrix} \right )$.
We often consider the locally reductive algebra
$\ggl(\infty;\C) = \varinjlim \ggl(n;\C)$ along with $\gsl(\infty;\C)$.
\medskip

Let $G_n$ be a real (this includes complex) simple Lie group of classical 
type and real rank $n$.  We have just described it as sitting in a direct 
system $\{G_n\}$ of Lie algebras in the same series.  
Set $G = \varinjlim G_n$ as above.  Then we have coherent Iwasawa
decompositions $G_n = K_nA_nN_n$ with $K_n \subset K_\ell$, 
$A_n \subset A_\ell$ and $N_n \subset N_\ell$ for $\ell \geqq n$.  We need
to do this so that the direct limit respects the restricted root structures,
in particular the strongly orthogonal root structures, 
of the $N_n$\,.   To do that we enumerate the set 
$\Psi_n = \Psi(\gg_n, \gh_n)$ of nonmultipliable simple restricted
roots so that, in the Dynkin diagram, for type $A$ we spread from the 
center of the diagram.  For types $B$, $C$ and $D$ 
$\psi_1$ is the \textit{right} endpoint,
In other words for $\ell \geqq n$ $\Psi_\ell$
is constructed from $\Psi_n$ adding simple roots to the \textit{left} end
of their Dynkin diagrams.  Thus
\begin{equation}\label{rootorderA}
\begin{aligned} 
&\begin{tabular}{|c|l|c|}\hline
$\Psi_\ell \text{ type } A_{2\ell+1}$ &
\setlength{\unitlength}{.4 mm}
\begin{picture}(180,18)
\put(10,2){\circle{2}}
\put(5,5){$\psi_{-\ell}$}
\put(11,2){\line(1,0){13}}
\put(27,2){\circle*{1}}
\put(30,2){\circle*{1}}
\put(33,2){\circle*{1}}
\put(36,2){\line(1,0){13}}
\put(50,2){\circle{2}}
\put(45,5){$\psi_{-n}$}
\put(51,2){\line(1,0){13}}
\put(67,2){\circle*{1}}
\put(70,2){\circle*{1}}
\put(73,2){\circle*{1}}
\put(76,2){\line(1,0){13}}
\put(90,2){\circle{2}}
\put(87,5){$\psi_0$}
\put(91,2){\line(1,0){13}}
\put(107,2){\circle*{1}}
\put(110,2){\circle*{1}}
\put(113,2){\circle*{1}}
\put(116,2){\line(1,0){13}}
\put(130,2){\circle{2}}
\put(128,5){$\psi_n$}
\put(131,2){\line(1,0){13}}
\put(147,2){\circle*{1}}
\put(150,2){\circle*{1}}
\put(153,2){\circle*{1}}
\put(156,2){\line(1,0){13}}
\put(170,2){\circle{2}}
\put(167,5){$\psi_\ell$}
\end{picture}
&$\ell \geqq n \geqq 0$
\\
\hline
\end{tabular}\\
&\begin{tabular}{|c|l|c|}\hline
\setlength{\unitlength}{.4 mm}
$\Psi_\ell \text{ type } A_{2\ell}\phantom{i.}$ &
\setlength{\unitlength}{.4 mm}
\begin{picture}(180,18)
\put(1,2){\circle{2}}
\put(-4,5){$\psi_{-\ell}$}
\put(2,2){\line(1,0){13}}
\put(18,2){\circle*{1}}
\put(21,2){\circle*{1}}
\put(24,2){\circle*{1}}
\put(27,2){\line(1,0){13}}
\put(41,2){\circle{2}}
\put(36,5){$\psi_{-n}$}
\put(42,2){\line(1,0){13}}
\put(58,2){\circle*{1}}
\put(61,2){\circle*{1}}
\put(64,2){\circle*{1}}
\put(67,2){\line(1,0){13}}
\put(81,2){\circle{2}}
\put(78,5){$\psi_{-1}$}
\put(82,2){\line(1,0){13}}
\put(96,2){\circle{2}}
\put(93,5){$\psi_1$}
\put(97,2){\line(1,0){13}}
\put(113,2){\circle*{1}}
\put(116,2){\circle*{1}}
\put(119,2){\circle*{1}}
\put(122,2){\line(1,0){13}}
\put(136,2){\circle{2}}
\put(134,5){$\psi_n$}
\put(137,2){\line(1,0){13}}
\put(153,2){\circle*{1}}
\put(156,2){\circle*{1}}
\put(159,2){\circle*{1}}
\put(162,2){\line(1,0){13}}
\put(176,2){\circle{2}}
\put(173,5){$\psi_\ell$}
\end{picture}
&$\ell \geqq n \geqq 1$
\\
\hline
\end{tabular}
\end{aligned}
\end{equation}

\begin{equation}\label{rootorderBCD}
\begin{aligned}
&\begin{tabular}{|c|l|c|}\hline
$\Psi_\ell \text{ type } B_\ell$&
\setlength{\unitlength}{.5 mm}
\begin{picture}(155,13)
\put(5,2){\circle{2}}
\put(2,5){$\psi_{\ell}$}
\put(6,2){\line(1,0){13}}
\put(24,2){\circle*{1}}
\put(27,2){\circle*{1}}
\put(30,2){\circle*{1}}
\put(34,2){\line(1,0){13}}
\put(48,2){\circle{2}}
\put(45,5){$\psi_n$}
\put(49,2){\line(1,0){23}}
\put(73,2){\circle{2}}
\put(70,5){$\psi_{n-1}$}
\put(74,2){\line(1,0){13}}
\put(93,2){\circle*{1}}
\put(96,2){\circle*{1}}
\put(99,2){\circle*{1}}
\put(104,2){\line(1,0){13}}
\put(118,2){\circle{2}}
\put(115,5){$\psi_2$}
\put(119,2.5){\line(1,0){23}}
\put(119,1.5){\line(1,0){23}}
\put(143,2){\circle*{2}}
\put(140,5){$\psi_1$}
\end{picture}
&$\ell\geqq n \geqq 2$\\
\hline
\end{tabular} \\
&\begin{tabular}{|c|l|c|}\hline
$\Psi_\ell \text{ type } C_\ell$ &
\setlength{\unitlength}{.5 mm}
\begin{picture}(155,13)
\put(5,2){\circle*{2}}
\put(2,5){$\psi_{\ell}$}
\put(6,2){\line(1,0){13}}
\put(24,2){\circle*{1}}
\put(27,2){\circle*{1}}
\put(30,2){\circle*{1}}
\put(34,2){\line(1,0){13}}
\put(48,2){\circle*{2}}
\put(45,5){$\psi_n$}
\put(49,2){\line(1,0){23}}
\put(73,2){\circle*{2}}
\put(70,5){$\psi_{n-1}$}
\put(74,2){\line(1,0){13}}
\put(93,2){\circle*{1}}
\put(96,2){\circle*{1}}
\put(99,2){\circle*{1}}
\put(104,2){\line(1,0){13}}
\put(118,2){\circle*{2}}
\put(115,5){$\psi_2$}
\put(119,2.5){\line(1,0){23}}
\put(119,1.5){\line(1,0){23}}
\put(143,2){\circle{2}}
\put(140,5){$\psi_1$}
\end{picture}
& $\ell\geqq n \geqq 3$
\\
\hline
\end{tabular}\\
&\begin{tabular}{|c|l|c|}\hline
$\Psi_\ell \text{ type } D_\ell$ &
\setlength{\unitlength}{.5 mm}
\begin{picture}(155,20)
\put(5,9){\circle{2}}
\put(2,12){$\psi_{\ell}$}
\put(6,9){\line(1,0){13}}
\put(24,9){\circle*{1}}
\put(27,9){\circle*{1}}
\put(30,9){\circle*{1}}
\put(34,9){\line(1,0){13}}
\put(48,9){\circle{2}}
\put(45,12){$\psi_n$}
\put(49,9){\line(1,0){23}}
\put(73,9){\circle{2}}
\put(70,12){$\psi_{n-1}$}
\put(74,9){\line(1,0){13}}
\put(93,9){\circle*{1}}
\put(96,9){\circle*{1}}
\put(99,9){\circle*{1}}
\put(104,9){\line(1,0){13}}
\put(118,9){\circle{2}}
\put(113,12){$\psi_3$}
\put(119,8.5){\line(2,-1){13}}
\put(133,2){\circle{2}}
\put(136,0){$\psi_1$}
\put(119,9.5){\line(2,1){13}}
\put(133,16){\circle{2}}
\put(136,14){$\psi_2$}
\end{picture}
& $\ell\geqq n \geqq 4$
\\
\hline
\end{tabular}
\end{aligned}
\end{equation}
We describe this by saying that $G_\ell$ {\em propagates} $G_n$\,.
For types $B$, $C$ and $D$ this is the same as the notion of propagation in
\cite{OW2011} and \cite{OW2014}, but for type $A$ is it s bit different.
With the simple root enumeration of (\ref{rootorderA}) and (\ref{rootorderBCD})
the set $\{\beta_1, \dots , \beta_m\}$ of strongly orthogonal positive 
restricted roots of (\ref{right-numbering}) is
\medskip

type $A_{2n+1}$: $m = n+1$; $\beta_1 = \psi_0$\,; 
	$\beta_2 = \psi_{-1} + \psi_0 + \psi_1$\,; $\cdots$ ;\,
	$\beta_r = \psi_{-r+1} + \beta_{r-1} + \psi_{r-1}$\,; $\cdots$
\medskip

type $A_{2n}$: $m = n$; $\beta_1 = \psi_{-1} + \psi_1$\,;
	$\beta_2 = \psi_{-2} + \psi_{-1} + \psi_1 + \psi_2$\,; $\cdots$ ;\, 
	$\beta_r = \psi_{-r} + \beta_{r-1} + \psi_r$\,; $\cdots$
\medskip

type $B_{2n+1}$: $m = 2n+1$; $\beta_1 = \psi_1\,;
	\beta_2 = \psi_3$ and $\beta_3 = 2(\psi_1 + \psi_2)+\psi_3$\,;
	$\cdots$ ; \hfill\newline
	\phantom{XXXXXXXXXXXXXXXXXXXXXXx}$\beta_{2r} = \psi_{2r+1}$ and
	$\beta_{2r+1} = 2(\psi_1 + \dots \psi_{2r})+\psi_{2r+1}$\,;\,\,$\cdots$
\medskip

type $B_{2n}$: $m = 2n$; $\beta_1 = \psi_2$ and $\beta_2 = 2\psi_1 + \psi_2$\,; 
   $\beta_3 = \psi_4$ and $\beta_4 = 2(\psi_1 + \psi_2 + \psi_3) + \psi_4$\,;
   $\cdots$ ; \hfill\newline
   \phantom{XXXXXXXXXXXXXi}$\beta_{2r+1} = \psi_{2r-1}$ and
	$\beta_{2r} = 2(\psi_1 + \dots \psi_{2r-1}) + \psi_{2r}$\,;\,\,$\cdots$
\medskip

type $C_n$: $m = n$; $\beta_1 = \psi_1$\,; $\beta_2 = \psi_1 + 2\psi_2$\,;
	\,\,$\cdots$\,; $\beta_r = \psi_1 + 2(\psi_2 + \dots + \psi_r)$\,;
	\,\,$\cdots$
\medskip

type $D_{2n+1}$: $m = 2n$; $\beta_1 = \psi_3$\,; 
	$\beta_2 = \psi_1 + \psi_2 + \psi_3$\,; \hfill\newline
	\phantom{XXXXXXXXXXXXXXi} $\beta_3 = \psi_5$ and 
		$\beta_4 = \psi_1 + \psi_2 + 2(\psi_3 +
		\psi_4) + \psi_5$\,; $\cdots$ ; \hfill\newline
        \phantom{XXXXXXXXXXXXXXi} $\beta_{2r-1} = \psi_{2r+1}$ and 
	$\beta_{2r} = \psi_1 + \psi_2 
	  + 2(\psi_3 + \dots + \psi_{2r}) + \psi_{2r+1}$\,;\,\,$\cdots$
\medskip

type $D_{2n}$: $m = 2n$; $\beta_1 = \psi_1$\,; $\beta_2 = \psi_2$\,;
	$\beta_3 = \psi_4$ and $\beta_4 = \psi_1 + \psi_2 + 2\psi_3 + \psi_4$\;
	\hfill\newline \phantom{XXXXXXXXXXXXXXXXXXXXXXXXi}
	$\beta_5 = \psi_6$ and $\beta_6 = \psi_1 + \psi_2 + 
		2(\psi_3 + \psi_4 + \psi_5) + \psi_6$\,; $\cdots$ ; 
	\hfill\newline \phantom{XXXXXXXXXXXXXXXXXXXXXXXXi}
	$\beta_{2r-1} = \psi_{2r}$ and $\beta_{2r} = \psi_1 + \psi_2 + 
		2(\psi_3+ \dots + \psi_{2r-1})+\psi_{2r}$\,;\,\,$\cdots$
\smallskip

In order to simplify use of these constructions we denote
\begin{definition}\label{well-aligned}{\rm
Let $G = \varinjlim G_n$ be a classical simple locally finite 
countable dimensional Lie group.  Possibly passing to a cofinal subsequence
suppose that we have coherent Iwasawa decompositions $G_n = K_n A_n N_n$ 
such that $G_\ell$ propagates $G_n$ for
$\ell \geqq n$.  Then, again possibly passing to a cofinal subsequence,
we can assume that all of the nonmultipliable restricted root 
systems $\Delta_0(\gg_n,\ga_n)$ are of the same type $A_{2n+1}$,
$A_{2n}$, $B_{2n+1}$, $B_{2n}$, $C_n$, $D_{2n+1}$ or $D_{2n}$.  Then 
we will say that the direct system $\{G_n\}$ is} well--aligned.
\hfill $\diamondsuit$
\end{definition}

The condition that $\{G_n\}$ be well--aligned is exactly what we need for
$\{N_n\}$ to satisfy (\ref{newsetup}), and given $G$ we have a realization
$G = \varinjlim G_n$ for which $\{G_n\}$ is well--aligned.  In summary,
\begin{theorem}\label{inversion-for-ss-limits}
Let $G$ be a classical connected countable dimensional real reductive Lie
group.  Express $G = \varinjlim G_n$ with $\{G_n\}$ well--aligned.
Then $\{N_n\}$ satisfies {\rm (\ref{newsetup})}.  In particular
{\rm Theorem \ref{iwasawa-layers}} holds for the 
maximal locally unipotent subgroup $N = \varinjlim N_n$ of $G$.
\end{theorem}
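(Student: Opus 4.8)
The plan is to reduce the assertion to a single verification: that the hypotheses (\ref{newsetup}) hold for the system $\{N_n\}$ attached to a well--aligned realization $G = \varinjlim G_n$. Once (\ref{newsetup}) is in hand, the structural statement of Theorem \ref{iwasawa-layers} for each $N_n$ is immediate from \cite[Theorem 6.16]{W2013}, and the Fourier inversion assertion for $N = \varinjlim N_n$ is exactly Theorem \ref{limit-inversion}, whose proof used only (\ref{newsetup}). So the real work is purely structural and root--theoretic. For each fixed $n$, Lemmas \ref{fill-out}, \ref{layers2} and \ref{layers-nilpotent} already produce the layer decomposition $\gn_n = \bigoplus_r \gl_r$ with $\gl_r = \gg_{\beta_r} + \sum_{\Delta^+_r}\gg_\alpha$, the bracket relation $[\gl_r,\gl_s]\subset \gl_{\max\{r,s\}}$, and the square integrability of each factor $L_r$; thus (a), (b) and the single--group form of (c), (d) hold internally to each $N_n$. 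What must be added for the direct limit is the \emph{coherence} of these decompositions: that for $\ell \geqq n$ the factors $L_1,\dots,L_{m_n}$ occurring in $N_\ell$ are literally the same as those in $N_n$, and that $N_\ell = N_n \ltimes N_{n,\ell}$ with $N_{n,\ell} = L_{m_n+1}\cdots L_{m_\ell}$ normal.

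First I would establish that the cascade stabilizes. With the enumeration of simple restricted roots fixed in (\ref{rootorderA})--(\ref{rootorderBCD}) --- spreading outward from the center of the $A$ diagram, and anchored at the right endpoint for types $B$, $C$, $D$ --- the explicit formulas for $\{\beta_1,\dots,\beta_m\}$ in the reversed ordering (\ref{right-numbering}) express each $\beta_r$ as a fixed combination of simple roots of bounded index, \emph{independent of $\ell$}. Hence for $\ell \geqq n$ the first $m_n$ strongly orthogonal roots for $G_\ell$ coincide with those for $G_n$, and propagation merely adjoins new roots $\beta_{m_n+1},\dots,\beta_{m_\ell}$ built from the freshly added simple roots. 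Next I would show the layers themselves are stable, i.e. the root set defining $\gl_r$ is the same whether computed inside $\Delta(\gg_n,\ga_n)$ or $\Delta(\gg_\ell,\ga_\ell)$. Using the characterization of Lemma \ref{layers2}, $\Delta^+_r \cup \{\beta_r\} = \{\alpha \mid \alpha \perp \beta_i \text{ for } i > r,\ \langle\alpha,\beta_r\rangle > 0\}$, this amounts to checking two things: every $\alpha \in \Delta^+(\gg_n,\ga_n)$ is automatically orthogonal to each newly adjoined $\beta_i$ ($i > m_n$), and no newly created root of $\Delta^+(\gg_\ell,\ga_\ell)$ satisfies the defining conditions for an $r \leqq m_n$. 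Both reduce to the elementary root--system fact (type by type) that a root supported on the old, interior simple roots pairs to zero with the new cascade roots, which are supported on the freshly appended, strictly larger--index simple roots; the outward/endpoint enumeration is exactly what guarantees this.

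With stability in hand, coherence is formal. The analytic subgroups $L_r$ ($r \leqq m_n$) are common to all $N_\ell$, giving (\ref{newsetup}a,b). The stabilized relation $[\gl_r,\gl_s]\subset \gl_{\max\{r,s\}}$ shows that $\gn_{n,\ell} = \gl_{m_n+1}+\cdots+\gl_{m_\ell}$ is an ideal in $\gn_\ell$ complemented by $\gn_n$, so $N_\ell = N_n \ltimes N_{n,\ell}$ as required in (\ref{newsetup}c); note that the reversal (\ref{right-numbering}) turns the condition ``$r > s$'' of (\ref{setup}) into the ``$r < s$'' demanded by (\ref{newsetup}). Setting $\gz_r$ to be the center of $\gl_r$ and $\gv_r$ a complement paired nondegenerately by $b_{\lambda_r}$, the relations $[\gl_r,\gz_s] = 0$ and $[\gl_r,\gl_s]\subset \gv$ for $r < s$ in (\ref{newsetup}d) follow from Lemma \ref{layers2} together with the strong orthogonality of the $\beta_r$, which forces the quasi--center $\gs_n = \bigoplus \gz_r$ to behave as in the finite--dimensional case. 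This verifies (\ref{newsetup}) for $\{N_n\}$.

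Finally, having (\ref{newsetup}), each $N_n$ is stepwise square integrable by \cite[Theorem 6.16]{W2013} (= Theorem \ref{iwasawa-layers}), the inverse systems (\ref{rescale5}), (\ref{rescale5s}) and Proposition \ref{compare} apply verbatim, and Theorem \ref{limit-inversion} delivers the inversion formula for $N = \varinjlim N_n$. The decisive step --- and the only one requiring genuine computation --- is the stability of the layers under propagation in the second paragraph; everything else is bookkeeping or a direct citation. I expect the main obstacle to be the uniformity of that orthogonality check across the split, quasi--split and remaining real forms of types $A$ through $D$, where the doubled indices (e.g. $B_{2n}$ versus $B_{2n+1}$, $D_{2n}$ versus $D_{2n+1}$) and the possible presence of $\tfrac12\beta_r$ as a root (the multipliable case flagged in Lemma \ref{layers-nilpotent}) must each be shown compatible with the chosen enumeration.
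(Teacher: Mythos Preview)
Your approach is correct and matches the paper's. The paper gives no separate proof of Theorem \ref{inversion-for-ss-limits}; it is stated as a summary (``In summary, \dots'') of the preceding explicit case--by--case listing of the strongly orthogonal roots $\beta_r$ under the enumerations (\ref{rootorderA})--(\ref{rootorderBCD}), together with the assertion that well--alignment is exactly the coherence condition needed for (\ref{newsetup}). Your proposal simply spells out what that summary leaves implicit: stability of the cascade and of the layers $\gl_r$ under propagation, reduced via Lemma \ref{layers2} to the orthogonality of old roots against the newly adjoined $\beta_i$, followed by citation of Theorems \ref{iwasawa-layers} and \ref{limit-inversion}.
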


\begin{remark}{\rm
In Theorem \ref{inversion-for-ss-limits} the possibilities for $G$ are
the finite dimensional simple Lie groups and the infinite dimensional
$SL(\infty;\C)$, $SO(\infty;\C)$, $Sp(\infty;\C)$, $SL(\infty;\R)$,
$SL(\infty;\H)$, $SU(\infty,q)$ with $q \leqq \infty$, $SO(\infty,q)$
with $q \leqq \infty$, $Sp(\infty,q)$ with $q \leqq \infty$, 
$Sp(\infty;\R)$ and $SO^*(2\infty)$.  Further, the normalizer
$P = MAN$ of $N$ in $G$ is a classical minimal parabolic subgroup
$\varinjlim (P_n = M_nA_nN_n)$ where $P_n$ is the minimal
parabolic in $G_n$ that is the normalizer of $N_n$\,.}
\hfill $\diamondsuit$
\end{remark}

\medskip
\noindent Department of Mathematics, University of California,\hfill\newline
\noindent Berkeley, California 94720--3840, USA\hfill\newline
\smallskip
\noindent {\tt jawolf@math.berkeley.edu}

\enddocument
\end